\documentclass[letterpaper,11pt]{article}
\usepackage[T1]{fontenc}
\usepackage[utf8]{inputenc}
\usepackage[english]{babel}
\usepackage{fullpage}
\usepackage{amsmath,amsfonts,amssymb,amsthm,mathtools,dsfont,bbm}
\usepackage{graphicx,adjustbox,color,xcolor}
\usepackage{tikz,pgfplots,subcaption}
\usepackage{algorithm,algpseudocode}
\usepackage{booktabs,threeparttable,array,pifont} 
\usepackage{enumitem}
\usepackage{cite}
\usepackage{hyperref}
\usepackage[capitalize,nameinlink,noabbrev]{cleveref}

\allowdisplaybreaks
\bibliographystyle{plain}
\mathtoolsset{centercolon}
\usetikzlibrary{arrows.meta}
\pgfplotsset{compat=1.18}
\hypersetup{colorlinks, hypertexnames=false, pageanchor=true,
            linkcolor=blue, citecolor={green!50!black}, urlcolor=cyan,
            pdftitle={On graphs with finite-time consensus and their use in gradient tracking}
            }

\newtheorem{definition}{{Definition}}[section]
\newtheorem{assumption}{{Assumption}}[section]
\newtheorem{theorem}{{Theorem}}[section]
\newtheorem{lemma}[theorem]{{Lemma}}
\newtheorem{corollary}[theorem]{{Corollary}}
\newtheorem{proposition}[theorem]{{Proposition}}
\crefname{assumption}{Assumption}{Assumptions}

\newcommand{\reals}                  {\mathbb R}
\newcommand{\natint}                 {\mathbb N}
\newcommand{\symm}                   {\mathbb S}
\newcommand{\ones}                   {\mathds{1}}
\newcommand{\jm}                     {\hat{\mathrm{\jmath}}}

\newcommand{\tran}                   {^{\mathsf T}}               
\newcommand{\conj}                   {^{\mathsf H}}               
\DeclareMathOperator{\diag}          {diag}                       
\DeclareMathOperator{\col}           {col}                        

\DeclareMathOperator{\dom}           {{\bf dom}}                  
\DeclareMathOperator{\intr}          {{\bf int}}                  

\newcommand{\Prob}                   {\mathbb P}                  
\newcommand{\Ex}                     {\mathbb E}                  

\newcommand{\mini}                   {\text{\upshape minimize}}
\newcommand{\eg}                     {{\it e.g.}}
\newcommand{\ie}                     {{\it i.e.}}

\newcommand{\oline}[1]{\mkern 1.5mu\overline{\mkern-1.5mu#1}}
\newcommand{\F}{\boldsymbol{F}}
\newcommand{\I}{\boldsymbol{I}}
\newcommand{\W}{\boldsymbol{W}}
\newcommand{\f}{\boldsymbol{f}}
\newcommand{\g}{\boldsymbol{g}}
\newcommand{\s}{\boldsymbol{s}}
\newcommand{\x}{\boldsymbol{x}}
\newcommand{\cE}{\mathcal{E}}
\newcommand{\cG}{\mathcal{G}}
\newcommand{\cF}{\mathcal{F}}
\newcommand{\cN}{\mathcal{N}}
\newcommand{\cV}{\mathcal{V}}
\newcommand{\sbar}{\oline{s}}
\newcommand{\xbar}{\oline{x}}
\newcommand{\ftilde}{\tilde{f}}
\newcommand{\ehat}{\hat{e}}

\newcommand{\define}{\triangleq} 
\newcommand{\bxi}{\boldsymbol{\xi}}

\newcolumntype{L}{>{\centering\arraybackslash}m{3cm}}

\title{On graphs with finite-time consensus \\
and their use in gradient tracking\footnote{Author's final version. Accepted for publication in \textit{SIAM Journal on Optimization}.}}
\author{Edward Duc Hien Nguyen%
\thanks{Department of Electrical and Computer Engineering, Rice University. Email: \textsf{en18@rice.edu}.}
\and Xin Jiang%
\thanks{School of Operations Research and Information Engineering, Cornell University. Email: \textsf{xjiang@cornell.edu}. Much of the work was done when XJ was at Lehigh University.}
\and Bicheng Ying%
\thanks{Google Inc. Email: \textsf{ybc@google.com}.}
\and C\'esar A. Uribe%
\thanks{Department of Electrical and Computer Engineering and Ken Kennedy Institute, Rice University. Email: \textsf{cauribe@rice.edu}.}}
\date{January 29, 2025}

\begin{document}

\maketitle

\begin{abstract}
This paper studies sequences of graphs satisfying the finite-time consensus property (\ie, iterating through such a finite sequence is equivalent to performing global or exact averaging) and their use in the decentralized optimization algorithm Gradient Tracking. For each of the studied graph sequences, we provide an explicit weight matrix representation and prove their finite-time consensus property. Moreover, we incorporate such topology sequences into Gradient Tracking and present a new algorithmic scheme called Gradient Tracking for Finite-Time Consensus Topologies (GT-FT). We analyze the new scheme for nonconvex problems with stochastic gradient estimates. Our analysis shows that the convergence rate of GT-FT does not depend on the heterogeneity of the agents' functions or the connectivity of any individual graph in the topology sequence. Furthermore, owing to the sparsity of the graphs, GT-FT requires lower communication costs than Gradient Tracking using the static counterpart of the topology sequence.
\end{abstract}

\section{Introduction} \label{sec:introduction}

We study the optimization problem
\begin{equation} \label{eq:prob}
    \underset{x \in \reals^d}{\mini} \ \ f(x) \define \frac{1}{n} \sum_{i=1}^n f_i (x), \quad \text{where} \ f_i(x) \define \Ex_{\xi_i}[F_i(x; \xi_i)],
\end{equation}
each $f_i \colon \reals^d \rightarrow \reals$ is a smooth, possibly nonconvex function, and the symbol~$\Ex_{\xi_i}$ denotes the expected value of the random variable $\xi_i$ associated with the probability space $\{\Omega_i, \cF_i, \Prob_i\}$. Hence, $f_i$ is defined as the expected value of some loss function $F_i(\cdot, \xi_i)$ over $\xi_i$. Algorithms for solving this type of problems often use stochastic gradients of~$f_i$, and are performed in a decentralized manner; \ie, each function~$f_i$ and probability space $\{\Omega_i, \cF_i, \Prob_i\}$ are held exclusively and privately by an agent $i \in \{1, \ldots, n\}$. Consequently, in decentralized algorithms for solving~\eqref{eq:prob}, agents must communicate with one another according to some network topology. We are motivated by the application of decentralized optimization algorithms to the high-performance computing scenario in which computing resources can be abstracted as agents. Under this scenario, not only is the network topology robust but it is also flexible. Therefore, it is critical to select topologies that reduce communication costs~\cite{lan2017communicationefficient, assran2019stochastic, NEURIPS2021_74e1ed8b, ding2023dsgd}.
\par
In this paper, we address the trade-off between communication costs and convergence rate in decentralized optimization algorithms and propose to use sequences of deterministic topologies that satisfy the \textit{finite-time consensus} property. Topology sequences with the finite-time consensus property have the desirable feature that iterating through the entire graph sequence is equivalent to performing global or exact averaging. Moreover, each of the individual graphs in such a sequence is typically sparse \cite{NEURIPS2021_74e1ed8b, Shi_2016, takezawa2023exponential} and, when used in a decentralized algorithm, requires limited communication costs at each iteration. We study several classes of topology sequences for which this seemingly restrictive requirement holds, including the static de Bruijn graph \cite{Bruijn1946ACP,delvenne2009optimal}, one-peer hyper-cubes \cite{Shi_2016}, one-peer exponential graphs \cite{NEURIPS2021_74e1ed8b}, and $p$-peer hyper-cuboids.
\par
Despite the existence of various topology sequences with the finite-time consensus property, directly incorporating them into decentralized algorithms is not straightforward. Classical analyses of decentralized optimization algorithms (\eg, \cite{alghunaim2021unified, koloskova2021improved,alghunaim2023enhanced}) assume, for example, symmetry and strong connectivity of the mixing matrices, which certain elements in a topology sequence with the finite-time consensus property might violate. Hence, novel analysis for decentralized algorithms using finite-time consensus must be developed.

\paragraph{Contributions}
The contribution of this work is two-fold. First, we study several sequences of graphs that satisfy the finite-time consensus property. For one-peer exponential graphs (with $n=2^\tau$) in \cite{NEURIPS2021_74e1ed8b}, we leverage their circulant property to develop an alternative proof for the finite-time consensus property. For an arbitrary number of agents, we present the sequence of graphs called \textit{$p$-peer hyper-cuboids} and establish their finite-time consensus property. We also show that in certain cases, $p$-peer hyper-cuboids are permutation equivalent to the well-studied de Bruijn graphs.
\par
Moreover, we incorporate the studied topology sequences into the Gradient Tracking (GT) algorithm and present a new algorithmic scheme called Gradient Tracking for Finite-Time Consensus Topologies (GT-FT). We present convergence analysis for GT-FT, with further stepsize tuning and a simple warm-up technique. The analysis establishes that the convergence rate is independent of the connectivity of any of the individual graphs used in the algorithm. Furthermore, we show that GT-FT using the presented graph sequences with finite-time consensus has the same iteration complexity as GT using the static counterparts. Considering the decentralized manner of the algorithms, it suggests that GT-FT has significantly lower communication costs compared with the static counterpart.

\paragraph{Outline}
The rest of the paper is organized as follows. \Cref{sec:related} discusses prior work on decentralized optimization algorithms and existing sequences of topologies satisfying the finite-time consensus property. \Cref{sec:fin-time-cons} formally defines the finite-time consensus property and presents various topology sequences with this property. \Cref{sec:alg-description} includes a description of the Time-Varying Gradient Tracking (TV-GT) algorithm and our modified version: Gradient Tracking for Finite-Time Consensus Graphs (GT-FT). \Cref{sec:analysis} presents convergence analysis of GT-FT under the nonconvex setting where agents only have access to stochastic gradient estimates. Numerical experiments in \Cref{sec:experiments} verify the finite-time consensus property of the topology sequences studied in \Cref{sec:fin-time-cons} and the algorithm analysis in \Cref{sec:analysis}.

\section{Related work} \label{sec:related}

In this section, we first review various decentralized optimization algorithms and describe the scope and settings for their analysis. We highlight various works analyzing gradient tracking methods and detail why their analysis cannot be straightforwardly extended to incorporate sequences of topologies that satisfy the finite-time consensus property. We then discuss prior work that studied sequences of topologies satisfying the finite-time consensus property.
\par
The Decentralized Stochastic Gradient method (DSGD)~\cite{lopes2007diffusion, cattivelli2010diffusion, ram2010distributed} is arguably the most popular and widely studied decentralized algorithm due to its simplicity. However, current analysis of DSGD relies on a bounded heterogeneity assumption, which bounds the allowed difference between local functions/data. The effect of heterogeneity is further magnified by large and sparse topologies, which further degrades the performance of DSGD~\cite{koloskova2020unified}.
\par
Exact or bias-corrected decentralized algorithms have been proposed to circumvent the negative effects caused by heterogeneity on the convergence of DSGD. Examples of these algorithms include EXTRA~\cite{shi2015extra}, Exact Diffusion~\cite{yuan2019exactdiffI}, and gradient tracking methods~\cite{nedic2017achieving}. These algorithms have been studied in a unified framework in~\cite{alghunaim2019decentralized} and~\cite{alghunaim2021unified} without the bounded heterogeneity assumption.
\par
The convergence of EXTRA and Exact Diffusion has only been established for symmetric and static mixing matrices~\cite{alghunaim2021unified}. On the contrary, gradient tracking methods with time-varying topologies (TV-GT) have been studied under various assumptions. Nedi{\'c}~et~al.~\cite{nedic2017achieving} analyze TV-GT, which they call DIGing, for the strongly convex scenario, with true gradient evaluation and with $\tau$-connected graphs. The assumption of $\tau$-connected graphs means that the union of a sequence of $\tau$-length graphs is connected, and is weaker than assuming that the topology at each iteration is connected. Another example of TV-GT is the algorithm NEXT~\cite{di2016next} (and its extension SONATA~\cite{scutari2019distributed}), which are analyzed for the nonconvex composite optimization problems with true gradient evaluation and with $\tau$-connected graphs. We note that DIGing~\cite{nedic2017achieving}, NEXT~\cite{di2016next}, and SONATA~\cite{scutari2019distributed} do not cover the case in which agents can only have access to stochastic gradient estimates of their local objective functions. However, extending the aforementioned analyses to account for stochastic gradient estimates is nontrivial and leads to a new line of research. The more recent analyses performed in~\cite{alghunaim2021unified, koloskova2021improved} successfully analyze Gradient Tracking using stochastic gradient estimates, but their analyses are limited to only considering static graphs. In particular, it is explicitly stated in~\cite{koloskova2021improved} that the extension of their analysis to the time-varying setting is nontrivial. The analysis in \cite{alghunaim2021unified, koloskova2021improved} relies on the eigendecomposition of the weighted adjacency matrix associated with the considered static graph. Thus, it is assumed in \cite{alghunaim2021unified, koloskova2021improved} that the underlying graph is undirected, which does hold for, \eg, one-peer exponential graphs considered in our setting. Even if all the graphs in our proposed sequence were undirected, the product of two symmetric weighted adjacency matrices is not necessarily symmetric. Therefore, the analyses in \cite{alghunaim2021unified, koloskova2021improved} remain inapplicable. Later, Song~et~al.~\cite{song2023communicationefficient} provides convergence analysis for TV-GT under the nonconvex and stochastic setting and makes no assumption on the symmetry of the mixing matrices. Nevertheless, their convergence results rely on the smallest connectivity (in expectation) of the set of all (random) topologies used in the algorithm. This does not cover deterministic sequences of topologies that satisfy the finite-time consensus property where certain elements of the sequence are always disconnected. In summary, existing analyses of gradient tracking are limited in their capacity to encapsulate both stochastic gradients and time-varying graphs, and thus cannot be readily extended to capture our setting (graph sequence with finite-time consensus).
\par
Graph sequences with the so-called \textit{finite-time consensus} have been proposed and analyzed in various contexts. Delvenne~et~al.~\cite{delvenne2009optimal} establish the finite-time property for a sequence of static de Bruijn graphs. Shi~et~al.~\cite{Shi_2016} justify the finite-time consensus property for a sequence of one-peer hyper-cubes (when $n=2^\tau$ for some $\tau \in \natint_{\geq 1}$). Assran~et~al.~\cite{assran2019stochastic} observe in numerical experiments that one-peer exponential graphs (with $n=2^\tau$) have the finite-time consensus property, which is later theoretically justified in \cite{NEURIPS2021_74e1ed8b}. More recently, Takezawa~et~al.~\cite{takezawa2023exponential} claim to build graph sequences with finite-time consensus of any node size $n \in \natint$,  but they do not provide any theoretical justification. Although numerical evidence provided in \cite{takezawa2023exponential} demonstrates the finite-time consensus property of their proposed graphs, no explicit matrix representation is given and further investigation is needed. In another line of research, Ding~et~al.~\cite{ding2023dsgd} extend the optimal message passing algorithm developed in \cite{bar1993optimal} and propose a communication-optimal exact consensus algorithm. The algorithm proposed in \cite{ding2023dsgd} requires an additional copy of the optimization variable at each agent, and with the help of these auxiliary variables, achieves ``finite-time consensus'' for an arbitrary number of agents. The discussion of this approach is out of the scope of this paper, and further investigation is left as future work. \Cref{table:fin-con-topo} summarizes the existing graph sequences for which finite-time consensus is proven to hold. Despite the usefulness of such graph sequences, incorporating them in decentralized optimization algorithms is not straightforward. The only theoretical result in this regard, to the best of our knowledge, is \cite{NEURIPS2021_74e1ed8b}, which incorporates the one-peer exponential graphs into the Decentralized Momentum Stochastic Gradient (DmSGD) method. The authors of \cite{NEURIPS2021_74e1ed8b} show that the convergence rate of DmSGD using one-peer exponential graphs is the same as DmSGD using static exponential graphs. Yet, the final convergence rate derived in \cite{NEURIPS2021_74e1ed8b} fails to consider the stepsize condition imposed by their analysis. Specifically, to derive the final convergence rate in~\cite[Equation~(107)]{NEURIPS2021_74e1ed8b}, the authors select a stepsize that may be incompatible with some previously imposed stepsize conditions (\ie, \cite[Equation (105)]{NEURIPS2021_74e1ed8b}). Moreover, we note that the analysis of DSGD provided in~\cite{koloskova2020unified} can be adapted to encapsulate finite-time consensus graphs while DSGD is a special case of DmSGD (with the momentum parameter $\beta = 0$).
\begin{table*}[t]
\caption{Classes of graph sequences that satisfy the finite-time consensus property.}
\label{table:fin-con-topo}
\begin{center}
    \vspace{-10pt}
    \begin{adjustbox}{max width=\textwidth}
    \begin{threeparttable}
    \begin{tabular}{c c c c c} \toprule
        Topology & Orientation & Size $n$ & Maximum degree & 
        \begin{tabular}[c]{@{}c@{}} Num.\ of iterations ($\tau$) \\ for finite-time consensus \end{tabular} \\ \toprule
        \multicolumn{1}{l}{de Bruijn~\cite{delvenne2009optimal}} & Directed & Power of $p \in \natint_{\geq 2}$ & $p-1$ & $\log_p (n)$ \\ \midrule
        \multicolumn{1}{l}{One-peer exponential~\cite{NEURIPS2021_74e1ed8b}} & Directed & Power of $2$~$^\text{a}$ & $1$ & $\log_2(n)$ \\ \midrule
        \multicolumn{1}{l}{One-peer hyper-cube~\cite{Shi_2016}} & Undirected & Power of $2$ & $1$ & $\log_2(n)$ \\ \midrule
        \multicolumn{1}{l}{$p$-Peer hyper-cuboid} & Undirected & Any $n \in \natint$ & flexible$^\text{b}$ & flexible$^\text{b}$ \\ \bottomrule
    \end{tabular}
    \begin{tablenotes}
        \item[a] When $n \neq 2^\tau$, one-peer exponential graphs are still well defined but do not have finite-time consensus property.
        
        \item[b] For $p$-peer hyper-cuboids, the maximum degree and the number of iterations for finite-time consensus depend on a factorization of the node size $n$ and exhibits a trade-off. For example, when $n=20$ is factorized as $n=20=2 \times 2 \times 5$, the number of iterations for finite-time consensus is $\tau=3$, the number of factors, while the maximum degree in all the three graphs is $4$, the largest factor minus one. Another possible factorization is $n=20=2 \times 10$, in which case $\tau=2$ and the maximum degree (in the two graphs) is $9$. Also note that when~$n$ is a prime number, the $p$-peer hyper-cuboids reduce to the fully-connected graph.
    \end{tablenotes}    
    \end{threeparttable}
    \end{adjustbox}
\end{center}
\end{table*}

\section{Finite-time consensus} \label{sec:fin-time-cons}

In this section, we formally define the finite-time consensus property, and establish this property for several sequences of graphs. For one-peer exponential graphs (with $n=2^\tau$) presented in \cite{NEURIPS2021_74e1ed8b}, we present an alternative proof based on the circulant property. Moreover, we present $p$-peer hyper-cuboids of an arbitrary node size, and establish their finite-time consensus property. Finally, we show that in certain cases, the presented $p$-peer hyper-cuboids are permutation equivalent to the de Bruijn graphs.

\paragraph{Notation}
Only in this section, we modify the convention for indexing a matrix $W \in \reals^{n \times n}$. In this section, we start the indexing at $0$, \textit{i.e.}, the entries of an $n \times n$ matrix $W$ are
\[
    W = [w_{ij}], \quad \text{for } i, j = 0,1,\ldots, n-1.
\]
In the rest of the paper, we follow the convention and start the indexing at $1$, \textit{i.e.},
\[
    W = [w_{ij}], \quad \text{for } i, j = 1,2,\ldots, n.
\]

\subsection{Definition}

We formally present the conditions for a sequence of graphs (or topologies) to have the finite-time consensus (or exact averaging) property.
\begin{definition}
    \label{def:fin-time-cons}
    The sequence of graphs $\cG^{(l)} = (\cV, W^{(l)}, \cE^{(l)})$, $l = 0, \ldots, \tau-1$, has the finite-time consensus property with parameter $\tau \in \natint_{\geq 1}$ if and only if the weight matrices $\{W^{(l)}\}_{l=0}^{\tau-1}$ are doubly stochastic and satisfy
    \begin{equation} \label{eq:fin-time-cons-cond}
        W^{(\tau - 1)}W^{(\tau - 2)} \cdots W^{(1)} W^{(0)} = \tfrac{1}{n} \ones \ones\tran.
    \end{equation}
\end{definition}
\par
For a sequence of doubly stochastic matrices $\{W^{(l)}\}_{l=0}^{\tau-1}$, \eqref{eq:fin-time-cons-cond} is equivalent to
\begin{equation} \label{eq:fin-time-cons-residual}
    \big(W^{(\tau-1)} - \tfrac{1}{n} \ones \ones\tran \big) \big(W^{(\tau-2)} - \tfrac{1}{n} \ones \ones\tran \big) \cdots \big(W^{(1)} - \tfrac{1}{n} \ones \ones\tran) \big(W^{(0)} - \tfrac{1}{n} \ones \ones\tran \big) = 0.
\end{equation}
The parameter $\tau$ in \cref{def:fin-time-cons} is not arbitrary and might depend on the matrix size $n$ and graph structure. For the examples provided in \Cref{sec:exp-gra,sec:hyper-cube,sec:hyper-cuboid}, the parameter~$\tau$ will be clear from the context. In \cref{def:fin-time-cons}, each matrix $W^{(l)}$ in the sequence is required to be doubly stochastic but needs not be symmetric or connected. The potential disconnectivity might be a desirable property in the context of decentralized optimization, because such graphs tend to be sparser and using them in decentralized algorithms helps reduce the communication overhead at each iteration. Nevertheless, when considered jointly, a sequence of graphs satisfying \cref{def:fin-time-cons} exhibits the same connectivity properties as the fully connected graph. 
\par
More discussion on the existing graph sequences with finite-time consensus can be found in \Cref{sec:related}. In the remainder of this section, we detail the one-peer exponential graphs and the $p$-peer hyper-cuboids. For the well-studied de Bruijn graphs \cite{Bruijn1946ACP,delvenne2009optimal}, we establish its connection to the $p$-peer hyper-cuboids.

\subsection{One-peer exponential graphs} \label{sec:exp-gra}

We present the weight matrices of one-peer exponential graphs (which was developed in \cite{NEURIPS2021_74e1ed8b}) and list several properties of our interest. In particular, the weight matrices representing one-peer exponential graphs are asymmetric, doubly stochastic, sparse (potentially disconnected), and when $n$ is a power of 2, satisfy~\eqref{eq:fin-time-cons-cond}. As a byproduct, we develop an alternative proof for the finite-time consensus property of one-peer exponential graphs (when $n=2^\tau$).
\par
For a given matrix size $n \in \natint_{\geq 2}$, let $\tau = \lceil \log_2(n) \rceil$. Then, the weight matrices representing the one-peer exponential graphs $\{\cG^{(l)}\}_{l \in \natint}$ are defined as
\begin{equation} \label{def:exp-mat}
    w^{(l)}_{ij} = 
    \begin{cases}
        \frac{1}{2} & \text{if } \text{mod}(j-i, n) = 2^{\text{mod}(l, \tau)} \\
        \frac{1}{2} & \text{if } i = j \\
        0 & \text{otherwise,}
    \end{cases}
\end{equation}
and \cref{fig:expo-8} shows the three one-peer exponential graphs when $n=8$ and $\tau=3$. By definition, given $n \in \natint_{\geq 2}$, there exists a number of $\tau = \lceil \log_2 (n) \rceil$ distinct one-peer exponential graphs. All of them are asymmetric, doubly stochastic, circulant, and some of them are \textit{not} strongly connected. When one-peer exponential graphs are used as network topology in decentralized algorithms, each agent only communicates with one neighbor at each iteration. Thus, the total communication cost per iteration is $\Theta (1)$. Finally, the crucial property that makes one-peer exponential graphs useful in decentralized optimization is presented as follows.
\begin{proposition} \label{lem:exp-fin-time-cons}
    Given $n \in \natint_{\geq 2}$, let $\tau = \lceil \log_2 (n) \rceil$, and let $\{W^{(l)}\}_{l \in \natint} \subset \reals^{n \times n}$ be the weight matrices defined in~\eqref{def:exp-mat}. Each matrix $W^{(l)}$ is circulant and doubly stochastic, i.e., $W^{(l)} \ones = \ones$ and $\ones\tran W^{(l)} = \ones\tran$. In addition, if $n$ is a power of 2 (\ie, $n = 2^\tau$) and the index sequence $\{l_i\}_{i=0}^{\tau-1}$ satisfies $\{\mathrm{mod} (l_0, \tau), \dots, \mathrm{mod}(l_{\tau-1}, \tau)\} = \{0, \dots, \tau - 1\}$, then the matrices $\{W^{(l_i)}\}_{i=0}^{\tau-1}$ satisfy the finite-time consensus property in \cref{def:fin-time-cons}.
\end{proposition}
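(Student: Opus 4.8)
The plan is to exploit the circulant structure and reduce the matrix product to a single shift operator. First I would observe that each weight matrix factors as
\[
W^{(l)} = \tfrac{1}{2}\big(I + P^{2^{\text{mod}(l,\tau)}}\big),
\]
where $P \in \real^{n \times n}$ is the cyclic shift (permutation) matrix with $P_{ij} = 1$ exactly when $\text{mod}(j-i,n) = 1$. This representation makes the doubly stochastic and circulant claims immediate: $P$ and every $P^{2^s}$ are permutation matrices, so $I + P^{2^s}$ has exactly two ones in each row and column, and scaling by $\tfrac12$ forces all row and column sums to equal $1$; moreover each $W^{(l)}$ is a polynomial in $P$ and is therefore circulant.

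The central observation is that, being polynomials in the single matrix $P$, the matrices $W^{(l)}$ all commute. Hence for any index sequence satisfying $\{\text{mod}(l_0,\tau),\dots,\text{mod}(l_{\tau-1},\tau)\} = \{0,\dots,\tau-1\}$, the order of multiplication is irrelevant and the product collapses to
\[
W^{(l_{\tau-1})} \cdots W^{(l_0)} = \frac{1}{2^\tau}\prod_{s=0}^{\tau-1}\big(I + P^{2^s}\big),
\]
because the exponents $2^{\text{mod}(l_i,\tau)}$ range over exactly $\{2^0, 2^1,\dots,2^{\tau-1}\}$. This is where the hypothesis $n = 2^\tau$ first enters.

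Next I would invoke the binary-expansion identity $\prod_{s=0}^{\tau-1}(1 + x^{2^s}) = \sum_{j=0}^{2^\tau-1} x^j$, which holds because every integer $0 \le j < 2^\tau$ has a unique binary representation $j = \sum_s \epsilon_s 2^s$ with $\epsilon_s \in \{0,1\}$. Applying it with $x = P$ and using $P^n = P^{2^\tau} = I$ gives
\[
\frac{1}{2^\tau}\prod_{s=0}^{\tau-1}\big(I + P^{2^s}\big) = \frac{1}{n}\sum_{j=0}^{n-1} P^{j}.
\]
Finally I would show $\sum_{j=0}^{n-1} P^{j} = \one\one\tran$: the $(i,k)$ entry of $P^{j}$ equals $1$ precisely when $\text{mod}(k-i,n) = j$, so as $j$ ranges over $\{0,\dots,n-1\}$ exactly one term contributes a $1$ to each entry, and the sum is the all-ones matrix. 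Chaining the two displays yields $W^{(l_{\tau-1})}\cdots W^{(l_0)} = \tfrac{1}{n}\one\one\tran$, which is exactly condition~\eqref{eq:fin-time-cons-cond} of Definition~\ref{def:fin-time-cons}.

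The genuinely delicate points here are bookkeeping rather than depth: spotting the factorization $W^{(l)} = \tfrac12(I + P^{2^{\text{mod}(l,\tau)}})$, and checking that commutativity lets the prescribed index sequence reassemble into the ordered product $\prod_{s=0}^{\tau-1}(I + P^{2^s})$; the binary-product identity and the evaluation $\sum_{j=0}^{n-1}P^j = \one\one\tran$ are then routine. An equivalent route would diagonalize every $W^{(l)}$ simultaneously by the discrete Fourier matrix and verify that the product has eigenvalue $1$ on the all-ones mode and $0$ on every other mode; this reduces to showing that for each $k \in \{1,\dots,n-1\}$ some factor $\tfrac12(1 + \omega^{2^s k})$ vanishes, which follows by writing $k = 2^a \cdot (\text{odd})$ and taking $s = \tau - 1 - a$. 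I would present the shift-operator argument as the main proof, since it avoids complex arithmetic entirely and exposes why the binary structure of $n = 2^\tau$ is precisely what drives the finite-time consensus property.
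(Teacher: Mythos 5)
Your proof is correct, but it takes a genuinely different route from the paper's. The paper works in the Fourier domain: it invokes the eigenvalue decomposition of circulant matrices via the DFT matrix, reduces the product $W^{(\tau-1)}\cdots W^{(0)}$ to a product of diagonal eigenvalue matrices, and then shows every nontrivial diagonal entry vanishes by expanding $\prod_{l}\big(1+\omega^{-2^l k}\big)$ into the geometric sum $\sum_{q=0}^{n-1}\omega^{-kq}=0$. You stay entirely in the spatial domain: the factorization $W^{(l)} = \tfrac{1}{2}\big(I + P^{2^{\mathrm{mod}(l,\tau)}}\big)$ makes commutativity, and hence order-independence of the product over any admissible index sequence $\{l_i\}$, immediate, and the same unique-binary-representation identity $\prod_{s=0}^{\tau-1}(1+x^{2^s}) = \sum_{j=0}^{2^\tau-1}x^j$, applied at the matrix level with $x=P$, collapses the product to $\tfrac{1}{n}\sum_{j=0}^{n-1}P^j = \tfrac{1}{n}\one\one\tran$. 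So the two arguments pivot on the same combinatorial identity, but yours avoids complex arithmetic and the circulant eigendecomposition lemma altogether, handles arbitrary orderings from the outset rather than via the paper's closing remark on commutativity, and gives a cleaner justification of the circulant and doubly stochastic claims for general $n$ (which the paper essentially asserts). Two small remarks: your appeal to $P^{2^\tau}=I$ is unnecessary, since the exponents produced by the expansion already lie in $\{0,\ldots,n-1\}$; and the spectral route you sketch as an alternative (pick $s=\tau-1-a$ for $k = 2^a\cdot\mathrm{odd}$ so that one factor $\tfrac12(1+\omega^{2^sk})$ vanishes) is in fact a sharper, factor-by-factor version of what the paper does with the geometric series, and is the view one would need to quantify partial mixing when $n$ is not a power of $2$, where the exact product identity no longer applies.
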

From \cref{lem:exp-fin-time-cons}, given a sequence of one-peer exponential graphs with $n=2^\tau$, any permutation of this sequence will still satisfy the finite-time consensus property~\eqref{eq:fin-time-cons-cond}. The properties in \cref{def:fin-time-cons} are the minimum requirements needed for algorithm analysis in \Cref{sec:analysis}. Additional properties, such as the circulant property stated in \cref{lem:exp-fin-time-cons}, are desirable but unnecessary in algorithm design.

\begin{figure}
\centering
    \begin{subfigure}[b]{0.3\textwidth}
    \centering
    \begin{tikzpicture}[->,>=Stealth,shorten >=1pt,auto,node distance=1.0cm,
                        thick,main node/.style={circle,draw,font=\sffamily\small\bfseries,minimum size=0.3cm}]
        \begin{scope}[shift={(0,0)}]
            \foreach \i in {0,...,7} {
                \node[main node] (\i) at ({360/8 * (\i)}:1.5cm) {$\i$};
            }
            \foreach \i in {0,...,6} {
                \draw[->] (\i) to[bend right=20] (\the\numexpr\i+1\relax);
            }
            \draw[->] (7) to[bend right=20] (0);
        \end{scope}
    \end{tikzpicture}
    \caption{$\cG^{(0)}$.}
    \end{subfigure}
    \hfill
    \begin{subfigure}[b]{0.3\textwidth}
    \centering
    \begin{tikzpicture}[->,>=Stealth,shorten >=1pt,auto,node distance=1.0cm,
                        thick,main node/.style={circle,draw,font=\sffamily\small\bfseries,minimum size=0.3cm}]
        \begin{scope}[shift={(10cm,0)}]
            \foreach \i in {0,...,7} {
                \node[main node] (\i) at ({360/8 * (\i)}:1.5cm) {$\i$};
            }
            \foreach \i/\j in {0/2, 2/4, 4/6, 6/0} {
                \draw[->,in=135,out=145] (\i) to[bend right=-10] (\j);
            }
            \foreach \i/\j in {1/3, 3/5, 5/7, 7/1} {
                \draw[->,in=135,out=45] (\i) to[bend right=-10] (\j);
            }
        \end{scope}
    \end{tikzpicture}
    \caption{$\cG^{(1)}$.}
    \end{subfigure}
    \hfill
    \begin{subfigure}[b]{0.3\textwidth}
    \centering
    \begin{tikzpicture}[->,>=Stealth,shorten >=1pt,auto,node distance=1.0cm,
                        thick,main node/.style={circle,draw,font=\sffamily\small\bfseries,minimum size=0.3cm}]
        \begin{scope}[shift={(5cm,0)}]
            \foreach \i in {0,...,7} {
                \node[main node] (\i) at ({360/8 * (\i)}:1.5cm) {$\i$};
            }
            \foreach \i in {0,...,3} {
                \draw[<->] (\i) to (\the\numexpr\i+4\relax);
            }
        \end{scope}
    \end{tikzpicture}
    \caption{$\cG^{(2)}$.}
    \end{subfigure}
\caption{The three one-peer exponential graphs $\{\cG^{(l)}\}_{l=0}^2$ with $n=8$ and $\tau = 3$. Note that all nodes have self-loops, though not explicitly shown in the figure.}
\label{fig:expo-8}
\end{figure}
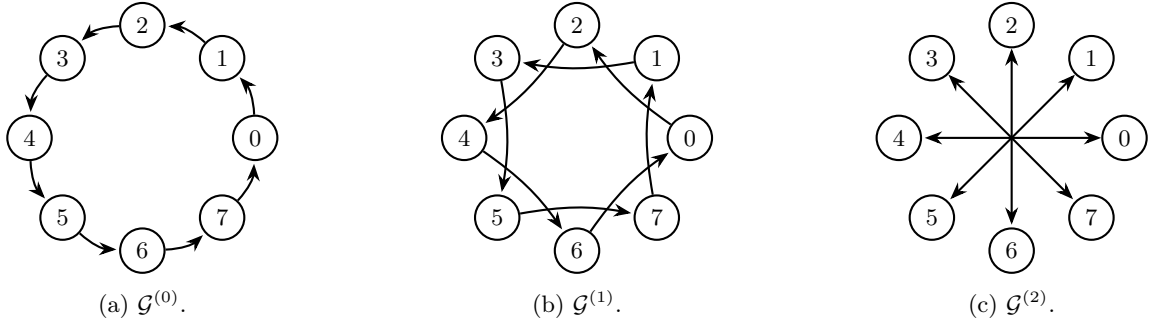

Our proof of \cref{lem:exp-fin-time-cons} relies on some basic properties of \textit{circulant matrices}, which are summarized in the following lemma and can be found in, \eg, in \cite[\S4.7.7]{horn13}.
\begin{lemma} \label{lem:circulant-matrix}
    The $n \times n$ circulant matrix associated with the $n$-vector \\ $c=(c_0,c_1,\ldots,c_{n-1})$ is defined by
    \begin{equation} \label{eq:circ-mat}
        C = \mathrm{Circ} (c_0,c_1,\ldots,c_{n-1}) \define \begin{bmatrix}
            c_0 & c_{n-1} & \cdots & c_2 & c_1 \\ 
            c_1 & c_0 & c_{n-1} & & c_2 \\ 
            \vdots & c_1 & c_0 & \ddots & \vdots \\ 
            c_{n-2} & & \ddots & \ddots & c_{n-1} \\ 
            c_{n-1} & c_{n-2} & \ddots & c_1 & c_0
        \end{bmatrix}.
    \end{equation}
    Let $\jm$ be the imaginary number ($\jm^2 = -1$) and let $\omega = \exp \big(\tfrac{2 \pi \jm}{n} \big)$ be a primitive $n$-th root of unity. Denote by $\mathsf{F}$ by the $n \times n$ discrete Fourier transform (DFT) matrix ($\mathsf{F}_{ij} = \tfrac{1}{\sqrt{n}} \omega^{ij}$), and denote by $\mathsf{F}\conj$ the Hermitian (conjugate transpose) of $\mathsf{F}$. Then, the eigenvalue decomposition of $C$~\eqref{eq:circ-mat} is given by
    \[
        C = \big(\tfrac{1}{\sqrt n} \mathsf{F}\big) \cdot \big(\mathrm{diag} (\mathsf{F} c) \big) \cdot \big(\tfrac{1}{\sqrt n} \mathsf{F} \conj \big),
    \]
    where the $\mathrm{diag}$ operator transforms an $n$-vector into an $n \times n$ diagonal matrix. Moreover, the eigenvalues of $C$~\eqref{eq:circ-mat} are given by
    \begin{equation} \label{eq:circ-eig}
        \lambda_k = c_0 + c_1 \omega^k + c_2 \omega^{2k} + \cdots + c_{n-1} \omega^{(n-1)k}, \quad k = 0,1,\ldots,n-1.
    \end{equation}
\end{lemma}

Now we present the proof for \cref{lem:exp-fin-time-cons}. The original proof in \cite{NEURIPS2021_74e1ed8b} leverages the binary representation of the integer $n$, but our proof is inspired by the circulant property of one-peer exponential graphs. (Note that the circulant property has been exploited in \cite{NEURIPS2021_74e1ed8b} to compute the eigenvalues of the \textit{static} exponential graph but was not used to establish finite-time consensus.)
\begin{proof}[Proof of \cref{lem:exp-fin-time-cons}]
    Note that the mixing matrices $\{W^{(l)}\}_{l=0}^{\tau-1}$ of one-peer exponential graphs defined in~\eqref{def:exp-mat} are circulant. Thus, we have
    \[
        W^{(\tau-1)} \cdots W^{(1)} W^{(0)} = \big(\tfrac{1}{\sqrt{n}} \mathsf{F} \big) \cdot \big(\Lambda^{({\tau-1})} \cdots \Lambda^{(1)} \Lambda^{(0)} \big) \cdot \big(\tfrac{1}{\sqrt{n}} \mathsf{F}\conj \big),
    \]
    where $\Lambda^{(l)} = \mathrm{diag} (\mathsf{F} c^{(l)})$ and $c^{(l)}$ is the first column of $W^{(l)}$, for $l=0,1,\ldots,\tau-1$. For simplicity, we denote $\Lambda \define \Lambda^{({\tau-1})} \cdots \Lambda^{(1)} \Lambda^{(0)}$.
    \par
    For each $l = 0,1,\ldots,\tau-1$, it follows from \eqref{eq:circ-eig} in \cref{lem:circulant-matrix} that the first element in $\mathsf{F} c^{(l)}$ is 1. This implies that the first element in $\Lambda$ is also 1. Similarly, the $k$-th diagonal element in~$\Lambda$, denoted by $\Lambda[k]$, can be found by expanding the definition $\Lambda^{(l)} = \mathrm{diag} (\mathsf{F} c^{(l)})$:
    \begin{align*}
        \Lambda [k] &= (\Lambda^{(\tau-1)} [k]) \cdots (\Lambda^{(2)}[k])(\Lambda^{(1)} [k]) (\Lambda^{(0)} [k])  \\
        &= \frac{1}{2^\tau} \left( (1 + \omega^{k (n-2^{\tau - 1})})  \cdots (1+\omega^{k(n-4)}) (1+\omega^{k(n-2)}) (1+\omega^{k(n-1)}) \right) \\
        &= \frac{1}{2^\tau} \left( (1 + \omega^{-2^{\tau - 1}k}) \cdots (1+\omega^{-4k})(1+\omega^{-2k})  (1+\omega^{-k}) \right) \\
        & = \frac{1}{2^\tau} \sum_{q = 0}^{n-1} \omega^{-k q}
        = \frac{1}{2^\tau} \left(\frac{1-\omega^{-k n}}{1-\omega^{-k}} \right)
        = 0,
    \end{align*}
    where in the first equation we use the assumption that $n = 2^\tau$. Hence, we have
    \begin{align*}
        W^{({\tau-1})} \cdots W^{(1)} W^{(0)} &= \big(\tfrac{1}{\sqrt{n}} \mathsf{F}\big) \Lambda^{({\tau-1})} \cdots \Lambda^{(1)} \Lambda^{(0)} \big(\tfrac{1}{\sqrt{n}} \mathsf{F}\conj \big) \\
        &= \big(\tfrac{1}{\sqrt{n}} \mathsf{F}\big) \diag (1,0,0,\dots,0) \big(\tfrac{1}{\sqrt{n}} \mathsf{F}\conj \big) \\
        &= \tfrac{1}{n} \ones \ones\tran,
    \end{align*}
    which establishes the desirable identity~\eqref{eq:fin-time-cons-cond}. To show~\eqref{eq:fin-time-cons-residual}, we first observe that
    \[
        \big(W^{({l+1})} - \tfrac{1}{n} \ones \ones\tran \big) \big(W^{(l)} - \tfrac{1}{n} \ones \ones\tran \big) = W^{({l+1})} W^{(l)} - \tfrac{1}{n} \ones \ones\tran, \ \ \text{for all} \ l=0,\ldots,\tau-2.
    \]
    This equality holds due to the doubly stochastic property of $W^{(l)}$. We can then repeat this process to obtain
    \[
        \big(W^{({\tau-1})} - \tfrac{1}{n} \ones \ones\tran \big) \cdots \big(W^{(0)} - \tfrac{1}{n} \ones \ones\tran \big) = W^{({\tau-1})} \cdots W^{(0)} - \tfrac{1}{n} \ones \ones\tran = 0,
    \]
    where in the last step we use~\eqref{eq:fin-time-cons-cond}. Combining this result with the fact that one-peer exponential graphs are periodic, and all instances of one-peer exponential graphs are commutative, we obtain that any permutation of any sequence of $W^{(l)}$ of length larger than $\tau$ has the finite-time consensus property as well.
\end{proof}

\subsection{One-peer hyper-cubes} \label{sec:hyper-cube}

Another example of graph sequences that satisfy \cref{def:fin-time-cons} is one-peer hyper-cubes~\cite{Shi_2016} (with $n$ a power of $2$). Hyper-cubes have been extensively studied in theoretical computer science (see, \eg, \cite{harary1988hypercube} for a survey). Still, the specialization to \textit{one-peer} hyper-cubes with finite-time consensus was first, to the best of our knowledge, discussed in~\cite{Shi_2016}. The formal definition of one-peer hyper-cubes is presented here to keep our work self-contained and, more importantly, to motivate the extension to $p$-peer hyper-cuboids for any $n \in \natint_{\geq 2}$ in \Cref{sec:hyper-cuboid}.

Let $n = 2^\tau$ with $\tau \in \natint_{\geq 1}$, the weight matrices $\{W^{(l)}\}_{l \in \natint} \subset \reals^{n \times n}$ representing the one-peer hyper-cubes $\{\cG^{(l)}\}_{l \in \natint}$ are defined by
\begin{equation} \label{def:hypercube-mat}
    w^{(l)}_{ij} = \begin{cases}
        \frac{1}{2} & \text{if } (i \wedge j) =2^{{\rm mod}(l, \tau)} \\
        \frac{1}{2} & \text{if } i = j \\
        0 & \text{otherwise},
    \end{cases}
\end{equation}
where the notation $i \wedge j$ represents the bit-wise XOR operation between integers $i$ and~$j$.
The difference in the definition of one-peer hyper-cubes~\eqref{def:hypercube-mat} and that of one-peer exponential graphs~\eqref{def:exp-mat} is minor yet critical: The operation $(i \wedge j)$ is used in~\eqref{def:hypercube-mat} while $\mathrm{mod} (j-i,n)$ in~\eqref{def:exp-mat}. Since $(i \wedge j) = (j \wedge i)$, it immediately follows that the mixing matrices of one-peer hyper-cubes are symmetric. We represent the integer $i$ in its binary form {${(i_{\tau-1} i_{\tau-2} \cdots i_0)}_2$}. Then, the first if-condition in~\eqref{def:hypercube-mat} can be rewritten as
\[ \vspace{-3pt}
    (i_{\tau-1} i_{\tau-2} \cdots i_0)_2 \wedge (j_{\tau-1} j_{\tau-2} \cdots j_0)_2 = (0 \cdots 0 \,1\underbrace{\,0\,\cdots\,0}_{\mathrm{mod}(l, \tau)})_2; \vspace{-3pt}
\]
that is, only the $({\mathrm{mod}(l, \tau)}+1)$-th digit in $i$'s and $j$'s binary representation is different, and the rest of the digits are the same. To construct one-peer hyper-cubes, we first index the vertices as $\tau$-digit binary numbers, and then an edge is created between two distinct vertices if their binary representations differ by a single digit. Finally, the finite-time consensus property proof for one-peer hyper-cubes is postponed to \Cref{sec:hyper-cuboid}, since one-peer hyper-cubes will be covered as a special case of the $p$-peer hyper-cuboids.

\subsection{\texorpdfstring{$p$}{p}-Peer hyper-cuboids} \label{sec:hyper-cuboid}

\begin{figure}
\centering
\begin{subfigure}[b]{0.3\textwidth}
\centering
\resizebox{1.0\columnwidth}{!}{
    \begin{tikzpicture}[>=Stealth,shorten >=1pt,auto,node distance=0.9cm,
                    thick,main node/.style={circle,draw,font=\small\bfseries,minimum size=0.7cm},scale=1.2]
    \begin{scope}[shift={(0,0)}]
        \foreach \x/\y/\z/\label in {0/0/0/9, 1.5/0/0/10, 1.5/1.5/0/7, 0/1.5/0/6, 0/0/2/3, 1.5/0/2/4, 1.5/1.5/2/1, 0/1.5/2/0, 3/0/0/11, 3/1.5/0/8, 3/0/2/5, 3/1.5/2/2}
        \node[main node] (\label) at (\x, \y, \z) {$\label$};
  
        \draw[gray!20] (0) -- (1) -- (2) -- (5) -- (4) -- (3) -- (0) -- (6) -- (7) -- (8) -- (11) to (5);
        \draw[gray!20] (1) to (7) to (10) to (4) to (1);
        \draw[gray!20] (3) to (9) to (6);
        \draw[gray!20] (11) to (10) to (9);
        \draw[gray!20] (2) to (8);

        \draw[<->] (0) to (1);
        \draw[<->] (1) to (2);
        \draw[<->] (6) to (7);
        \draw[<->] (7) to (8);

        \draw[<->] (9)  to (10);
        \draw[<->] (10) to (11);
        \draw[<->] (3)  to (4);
        \draw[<->] (4)  to (5);

        \draw[<->] (0) to[bend right=-20] (2);
        \draw[<->] (6) to[bend right=-20] (8);
        \draw[<->] (9) to[bend right=20]  (11);
        \draw[<->] (3) to[bend right=20]  (5);
    \end{scope}
    \end{tikzpicture}
}
\caption{$\cG^{(0)}$.}
\end{subfigure} 
\hfill
\begin{subfigure}[b]{0.3\textwidth}
\centering
\resizebox{1.0\columnwidth}{!}{
    \begin{tikzpicture}[>=Stealth,shorten >=1pt,auto,node distance=0.9cm,
                    thick,main node/.style={circle,draw,font=\small\bfseries,minimum size=0.7cm},scale=1.2]
    \begin{scope}[shift={(0,0)}]
        \foreach \x/\y/\z/\label in {0/0/0/9, 1.5/0/0/10, 1.5/1.5/0/7, 0/1.5/0/6, 0/0/2/3, 1.5/0/2/4, 1.5/1.5/2/1, 0/1.5/2/0, 3/0/0/11, 3/1.5/0/8, 3/0/2/5, 3/1.5/2/2}
        \node[main node] (\label) at (\x, \y, \z) {$\label$};
  
        \draw[gray!20] (0) -- (1) -- (2) -- (5) -- (4) -- (3) -- (0) -- (6) -- (7) -- (8) -- (11) to (5);
        \draw[gray!20] (1) to (7) to (10) to (4) to (1);
        \draw[gray!20] (3) to (9) to (6);
        \draw[gray!20] (11) to (10) to (9);
        \draw[gray!20] (2) to (8);

        \draw[<->] (0) to (3);
        \draw[<->] (1) to (4);
        \draw[<->] (6) to (9);
        \draw[<->] (7) to (10);

        \draw[<->] (8) to (11);
        \draw[<->] (2) to (5);

        \vphantom{
        \draw[<->] (0) to[bend right=-20] (2);
        \draw[<->] (6) to[bend right=-20] (8);
        \draw[<->] (9) to[bend right=20]  (11);
        \draw[<->] (3) to[bend right=20]  (5);
        }
    \end{scope}
    \end{tikzpicture}
}
\caption{$\cG^{(1)}$.}
\label{fig:hyper-cuboid-b}
\end{subfigure}
\hfill
\begin{subfigure}[b]{0.3\textwidth}
\centering
\resizebox{1.0\columnwidth}{!}{
\begin{tikzpicture}[>=Stealth,shorten >=1pt,auto,node distance=0.9cm,
                    thick,main node/.style={circle,draw,font=\small\bfseries,minimum size=0.7cm},scale=1.2]

    \begin{scope}[shift={(0,0)}]
        \foreach \x/\y/\z/\label in {0/0/0/9, 1.5/0/0/10, 1.5/1.5/0/7, 0/1.5/0/6, 0/0/2/3, 1.5/0/2/4, 1.5/1.5/2/1, 0/1.5/2/0, 3/0/0/11, 3/1.5/0/8, 3/0/2/5, 3/1.5/2/2}
        \node[main node] (\label) at (\x, \y, \z) {$\label$};
  
        \draw[gray!20] (0) -- (1) -- (2) -- (5) -- (4) -- (3) -- (0) -- (6) -- (7) -- (8) -- (11) to (5);
        \draw[gray!20] (1) to (7) to (10) to (4) to (1);
        \draw[gray!20] (3) to (9) to (6);
        \draw[gray!20] (11) to (10) to (9);
        \draw[gray!20] (2) to (8);

        \draw[<->] (0)  to (6);
        \draw[<->] (10) to (4);
        \draw[<->] (3)  to (9);
        \draw[<->] (7)  to (1);

        \draw[<->] (8)  to (2);
        \draw[<->] (11) to (5);

        \vphantom{
        \draw[<->] (0) to[bend right=-20] (2);
        \draw[<->] (6) to[bend right=-20] (8);
        \draw[<->] (9) to[bend right=20]  (11);
        \draw[<->] (3) to[bend right=20]  (5);
        }
    \end{scope}
\end{tikzpicture}
}
\caption{$\cG^{(2)}$.}
\end{subfigure}
\caption{The three $2$-peer hyper-cuboids $\{\cG^{(l)}\}_{l=0}^2$ with $n=12$, $(p_2,p_1,p_0) = (2,2,3)$, and $\tau=3$. Note that all nodes have self-loops, though not explicitly shown here.}
\label{fig:hyper-cuboid}
\end{figure}
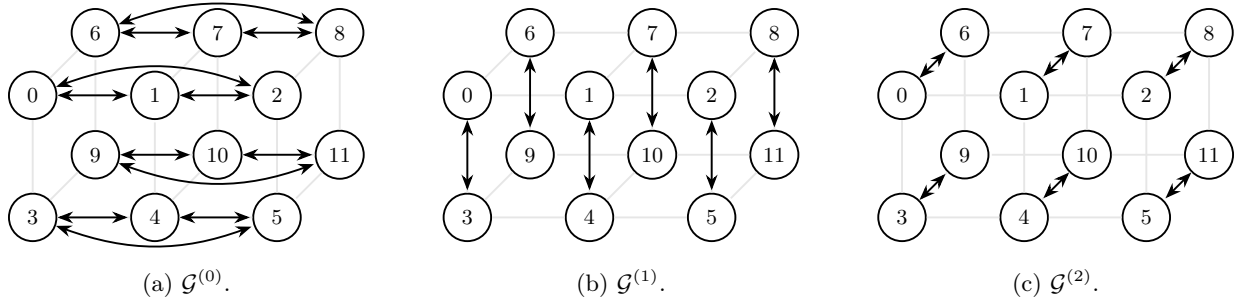

Both one-peer hyper-cubes and one-peer exponential graphs enjoy the finite-time consensus property when the number of agents is a power of 2. One natural extension of hyper-cubes to admit an arbitrary number of agents is the \textit{hyper-cuboid}, which has many different names (\eg, hyper-box, orthotope) and has been well studied in, \eg, \cite{coxeter1973regular}. So, borrowing the idea behind the one-peer hyper-cubes, we present a family of sparse graphs that achieves finite-time consensus for any integer $n \in \natint_{\geq 2}$. 
\par
Recall that one-peer hyper-cubes~\eqref{def:hypercube-mat} are defined via the binary representation of integers. Then, the extension to arbitrary matrix size $n$ relies on a \textit{multi-base representation} of integers \cite{krenn2015multi}. To be specific, the $(p_{\tau-1}, p_{\tau-2}, \ldots, p_0)$-based representation of an integer is an element in the group $\natint_{p_{\tau-1}} \times \natint_{p_{\tau-2}} \times \cdots \times \natint_{p_0}$, where $\natint_{p_k}$ is the group of nonnegative integers modulo $p_k \in \natint_{\geq 2}$. Any natural integer smaller than $p_{\tau-1} \times p_{\tau-2} \times \cdots \times p_0$ finds a one-to-one mapping in this group. For example, a $(2,2,\ldots,2)$-based representation is equivalent to the binary representation of an integer. A more informative example is the $(2,3)$-based representation. In this case, we can map the integers in $\{0,1,\ldots,5\}$ according to the following rule:
\begin{gather*}
    0 \to \{0\}_2 \times \{0\}_3, \quad 1 \to \{0\}_2 \times \{1\}_3, \quad 2 \to \{0\}_2 \times \{2\}_3, \\
    3 \to \{1\}_2 \times \{0\}_3, \quad 4 \to \{1\}_2 \times \{1\}_3, \quad 5 \to  \{1\}_2 \times \{2\}_3.
\end{gather*}
To shorten the notation, we overload binary representation and denote the \\ $(p_{\tau-1}, p_{\tau-2}, \ldots, p_0)$-based representation of $i \in \natint$ as $({i_{p_{\tau-1}} \cdots i_{p_1} i_{p_0}})_{p_{\tau-1}, \ldots, p_1, p_0}$, so that we can also re-write $\{a\}_{p_1} \times \{b\}_{p_0}$ as $(a,b)_{p_1,p_0}$.
\par
Now, we are ready to construct $p$-peer hyper-cuboids with $n$ agents. Suppose the prime factorization of~$n$ is given by $n = p_{\tau-1} \cdots p_1 p_0$, where all the $p_j$ are prime numbers, and define the $\tau$-vector $p=(p_0,p_1,\ldots,p_{\tau-1}) \in \natint^\tau$. (It is possible that $p_i = p_j$ for $i \neq j$, and the order of $\{p_j\}$ does not matter. Also, prime factorization is considered here only for simplicity. Any factorization of $n$ would build a sequence of $p$-peer hyper-cuboids; see also the footnote of \cref{table:fin-con-topo}.) Then, the weight matrices of $p$-peer hyper-cuboids are defined by 
\begin{equation} \label{def:hypercuboid-mat}
    w^{(l)}_{ij} = \begin{cases}
        \frac{1}{p_{\mathrm{mod}(l, \tau)}} & \text{if} \ (i \wedge_{p_{\tau-1}, \ldots, p_1, p_0} j) = (0, \cdots, 0, 1, \underbrace{0,\cdots,0}_{\mathrm{mod}(l, \tau)})_{p_{\tau-1}, \ldots, p_1, p_0} \\
        \frac{1}{p_{\mathrm{mod}(l, \tau)}} & \text{if} \ i = j \\
        0 & \text{otherwise},
    \end{cases}
\end{equation}
for $l=0,1,\ldots,\tau-1$, where $i \wedge_{p_{\tau-1}, \ldots, p_1, p_0} j$ denotes the bit-wise XOR operation between the $(p_{\tau-1}, \ldots, p_1, p_0)$-based representations of $i$ and $j$; that is, if the $p_k \in \natint_{p_k}$ element of $i$'s multi-base representations is the same as that of $j$, then return $\{0\}_{p_k}$, and otherwise return $\{1\}_{p_k}$. \cref{fig:hyper-cuboid} shows all three distinct $2$-peer hyper-cuboids with $12$ agents. In this example, $n=12$, $(p_2, p_1, p_0) = (2,2,3)$, and $\tau=3$. To illustrate the definition~\eqref{def:hypercube-mat}, take the edge $(i,j) = (8,11)$ in $\cG^{(1)}$ as an example; see \cref{fig:hyper-cuboid-b}. The two integers $i=8$ and $j=11$ are mapped in the $(2,2,3)$-based representation as
\[
    8 \to \{1\}_2 \times \{0\}_2 \times \{2\}_3, \qquad 11 \to \{1\}_2 \times \{1\}_2 \times \{2\}_3.
\]
These two representations differ only at the second sub-group $\natint_{p_1} = \natint_2$, and thus when $l=1$, agents $i=8$ and $j=11$ are connected with weight $w^{(l)}_{ij} = w^{(1)}_{8,11} = \tfrac{1}{p_1} = \tfrac{1}{2}$.

The definition of $p$-peer hyper-cuboids~\eqref{def:hypercuboid-mat} is clear as an extension from binary numbers to multi-base integer representations. Yet, the definition~\eqref{def:hypercuboid-mat} is less intuitive when we try to establish the properties of $p$-peer hyper-cuboids. It turns out that the weight matrix $W^{(l)}$ of $p$-peer hyper-cuboids defined in \eqref{def:hypercuboid-mat} also has an elegant representation in terms of Kronecker products:
\begin{equation} \label{def:hypercuboid-mat-kron}
    W^{(l)} = W^{(l)}(p_{\tau-1}) \otimes \ldots \otimes W^{(l)}(p_1)  \otimes W^{(l)}(p_{0}), 
\end{equation}
where each $p_r \times p_r$ matrix $W^{(l)}(p_r)$ is defined by
\begin{equation} \label{eq:hypercuboid-prf-1}
    W^{(l)} (p_r) = \begin{cases}
        I_{p_r} &\text{if } \mathrm{mod} (l, \tau) \neq r \\
        \tfrac{1}{p_r} \ones \ones\tran \qquad &\text{if} \ \mathrm{mod} (l, \tau) = r.
    \end{cases}  
\end{equation}
The equivalence between~\eqref{def:hypercuboid-mat} and~\eqref{def:hypercuboid-mat-kron} can be established as follows.
\begin{subequations}
\begin{align}
    W^{(l)} &= \sum_{i=0}^{n-1} \sum_{j=0}^{n-1} w^{(l)}_{ij} e_{i} e_{j}\tran \nonumber \\
    &= \sum_{i_{p_{\tau-1}}, \ldots, i_{p_0}} \hspace{-.4em}{\cdots} \hspace{-.4em} \sum_{j_{p_{\tau-1}}, \ldots, j_{p_0}} w^{(l)}_{ij} (\ehat_{i_{p_{\tau-1}}} \otimes \hspace{-.25em} {\cdots} \hspace{-.25em} \otimes \ehat_{i_{p_{0}}})(\ehat_{j_{p_{\tau-1}}} \otimes \hspace{-.25em} {\cdots} \hspace{-.25em} \otimes \ehat_{j_{p_{0}}})\tran \nonumber \\
    &= \sum_{i_{p_{\tau-1}}, \ldots, i_{p_0}} \sum_{j_{p_r} \colon r = \mathrm{mod} (l, \tau)} \tfrac{1}{p_{r}} (\ehat_{i_{p_{\tau-1}}} \ehat_{i_{p_{\tau-1}}}\tran) \otimes \hspace{-.25em} {\cdots} \hspace{-.25em} \otimes (\ehat_{i_{p_r}} \ehat_{j_{p_r}}\tran) \otimes \hspace{-.25em} {\cdots} \hspace{-.25em} \otimes (\ehat_{i_{p_{0}}} \ehat_{i_{p_{0}}}\tran) \label{eq:hypercuboid-prf-2} \\
    &= \Big(\sum\limits_{i_{p_{\tau-1}}} \ehat_{i_{p_{\tau-1}}} \ehat_{i_{p_{\tau-1}}}\tran \Big) \otimes \cdots \otimes \Big(\sum\limits_{i_{p_r}} \sum\limits_{j_{p_r}} \tfrac{1}{p_{r}} \ehat_{i_{p_r}} \ehat_{j_{p_r}}\tran \Big) \otimes \cdots \otimes \Big(\sum\limits_{i_{p_{0}}} \ehat_{i_{p_{0}}} \ehat_{i_{p_{0}}}\tran \Big) \label{eq:hypercuboid-prf-3} \\
    &= W^{(l)}(p_{\tau-1}) \otimes \cdots \otimes W^{(l)}(p_1) \otimes W^{(l)}(p_{0}). \label{eq:hypercuboid-prf-4}
\end{align}
\end{subequations}
Here, the notation $e_i$ is a base unit vector of length $n$, \ie, all entries are 0 except that the $i$-th entry is~$1$. (Recall that the index starts with $0$.) The notation $\hat e_{i_{p_r}}$ is a base unit vector of length $p_r$, where all entries are $0$ except for the \mbox{$(i_{p_r})$-th} entry, and recall $p_r$ is a prime factor of $n$. Step~\eqref{eq:hypercuboid-prf-2} removes the terms that are $0$ using definition~\eqref{def:hypercuboid-mat}, and then we apply the transpose and mixed-product properties of the Kronecker product. Step~\eqref{eq:hypercuboid-prf-3} distributes each summation into the corresponding Kronecker product, and~\eqref{eq:hypercuboid-prf-4} uses the definition of $W^{(l)} (p_r)$ in~\eqref{eq:hypercuboid-prf-1}.
\par
Now, we establish the finite-time consensus property of $p$-peer hyper-cuboids.
\begin{proposition}
    Given $n \in \natint_{\geq 2}$, let $\{W^{(l)}\}_{l \in \natint} \subset \reals^{n \times n}$ be the weight matrices defined in~\eqref{def:hypercuboid-mat}. Each matrix $W^{(l)}$ is symmetric and doubly stochastic. For an index sequence $\{l_i\}_{i=0}^{\tau-1}$ with
    \[
        \{\mathrm{mod} (l_0, \tau), \dots, \mathrm{mod}(l_{\tau-1}, \tau)\} = \{0, \dots, \tau-1\},
    \]
    the matrices $\{W^{(l_i)}\}_{i=0}^{\tau-1}$ satisfy the finite-time consensus property in \cref{def:fin-time-cons}.
\end{proposition}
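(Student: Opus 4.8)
The plan is to leverage the Kronecker-product representation in~\eqref{def:hypercuboid-mat-kron}--\eqref{eq:hypercuboid-prf-1} established above, which reduces the entire argument to elementary bookkeeping across the $\tau$ Kronecker ``slots.'' Each factor $W^{(l)}(p_r)$ is either $I_{p_r}$ or $\tfrac{1}{p_r}\one\one\tran$, and both are symmetric and doubly stochastic.

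First I would dispatch the two preliminary claims. Symmetry of $W^{(l)}$ follows because the Kronecker product of symmetric matrices is symmetric, and every factor $W^{(l)}(p_r)$ is symmetric. Double stochasticity follows similarly: nonnegativity is preserved under Kronecker products, and using $\one_{p_{\tau-1}} \otimes \cdots \otimes \one_{p_0} = \one_n$ together with the mixed-product property gives $W^{(l)} \one_n = \big(W^{(l)}(p_{\tau-1})\one_{p_{\tau-1}}\big) \otimes \cdots \otimes \big(W^{(l)}(p_0)\one_{p_0}\big) = \one_n$, and analogously $\one_n\tran W^{(l)} = \one_n\tran$.

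For the finite-time consensus identity, the key tool is again the mixed-product property $(A_{\tau-1}\otimes\cdots\otimes A_0)(B_{\tau-1}\otimes\cdots\otimes B_0) = (A_{\tau-1}B_{\tau-1})\otimes\cdots\otimes(A_0 B_0)$. Applying it repeatedly to the ordered product $W^{(l_{\tau-1})}\cdots W^{(l_0)}$ factors the product slot-by-slot, so that the $r$-th slot becomes $W^{(l_{\tau-1})}(p_r)\cdots W^{(l_0)}(p_r)$. The permutation hypothesis $\{\mathrm{mod}(l_0,\tau),\dots,\mathrm{mod}(l_{\tau-1},\tau)\} = \{0,\dots,\tau-1\}$ guarantees that for each fixed $r$ there is exactly one index $i$ with $\mathrm{mod}(l_i,\tau)=r$; at that index the slot factor equals $\tfrac{1}{p_r}\one\one\tran$ by~\eqref{eq:hypercuboid-prf-1}, while every other slot factor equals $I_{p_r}$. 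Since the identity commutes with everything, the intra-slot ordering is irrelevant and the $r$-th slot product collapses to $\tfrac{1}{p_r}\one\one\tran$.

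Collecting the slots gives $W^{(l_{\tau-1})}\cdots W^{(l_0)} = \bigotimes_{r=\tau-1}^{0} \tfrac{1}{p_r}\one\one\tran$. To conclude, I would pull out the scalars, using $\prod_r \tfrac{1}{p_r} = \tfrac{1}{n}$ since $n = p_{\tau-1}\cdots p_0$, and invoke $\one_{p_{\tau-1}}\otimes\cdots\otimes\one_{p_0} = \one_n$ to write $\bigotimes_r \one\one\tran = \big(\bigotimes_r \one\big)\big(\bigotimes_r \one\big)\tran = \one_n\one_n\tran$, yielding exactly $\tfrac{1}{n}\one_n\one_n\tran$ as required by~\eqref{eq:fin-time-cons-cond}. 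There is no serious obstacle once the Kronecker representation is in hand, as the analytic content was already absorbed into establishing~\eqref{def:hypercuboid-mat-kron}; the only point demanding care is verifying that the permutation condition deposits precisely one averaging factor $\tfrac{1}{p_r}\one\one\tran$ into each slot, which is what makes every slot product both well-defined and independent of the order of multiplication.
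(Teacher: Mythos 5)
Your proposal is correct and takes essentially the same route as the paper: both arguments rest on the Kronecker representation~\eqref{def:hypercuboid-mat-kron} together with the mixed-product property, collapsing the product slot-by-slot into $\big(\tfrac{1}{p_{\tau-1}}\one\one\tran\big)\otimes\cdots\otimes\big(\tfrac{1}{p_0}\one\one\tran\big)=\tfrac{1}{n}\one\one\tran$. If anything, your explicit slot-by-slot bookkeeping for an arbitrary index sequence satisfying the permutation hypothesis is slightly more direct than the paper's closing appeal to periodicity and commutativity of the matrices, but the underlying idea is identical.
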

\begin{proof}
    First, the symmetry of $W^{(l)}$ follows directly from its definition. To see the row stochastic property, we have
    \begin{align*}
        W^{(l)} \ones &= \left( W^{(l)}(p_{\tau-1}) \otimes W^{(l)}(p_{\tau-2}) \otimes \cdots \otimes W^{(l)}(p_0) \right) (\ones_{p_{\tau-1}} \otimes \cdots \otimes \ones_{p_0}) \\
        &= \left(W^{(l)}(p_{\tau-1}) \ones_{p_{\tau-1}} \right) \otimes \cdots \otimes \left( W^{(l)}(p_0) \ones_{p_0} \right) = \ones,
    \end{align*}
    where we use the mixed-product property of the Kronecker product. The column stochastic property follows from symmetry and row stochasticity. Hence, $W^{(l)}$ is doubly stochastic. Next, we show that the sequence $\{W^{(l)}\}_{l=0}^{\tau-1}$ has the finite-time consensus property. Utilizing the Kronecker product property, we establish that 
    \begin{align*}
        \prod_{l=0}^{\tau-1} W^{(l)} &= \prod_{l=0}^{\tau-1} \left( W^{(l)}(p_{\tau-1}) \otimes W^{(l)}(p_{\tau-2}) \otimes \cdots \otimes W^{(l)}(p_{0}) \right) \\
        &= \left(\prod_{l=0}^{\tau-1} W^{(l)}(p_{\tau-1}) \right) \otimes \left( \prod_{l=0}^{\tau-1} W^{(l)}(p_{\tau-2}) \right) \otimes \cdots \otimes \left( \prod_{l=0}^{\tau-1} W^{(l)}(p_0) \right) \\
        &= \left(\tfrac{1}{p_{\tau-1}} \ones_{p_{\tau-1}} \ones_{p_{\tau-1}}\tran \right) \otimes \cdots \otimes \left(\tfrac{1}{p_0} \ones_{p_0} \ones_{p_0}\tran \right) = \tfrac{1}{n} \ones \ones\tran,
    \end{align*}
    where we again use the mixed-product property. Combining this result with the fact that $p$-peer hyper-cuboids are periodic, and all instances of $p$-peer hyper-cuboids are commutative, we obtain that any permutation of any sequence of $W^{(l)}$ of length larger than $\tau$ has the finite-time consensus property as well.
\end{proof}

\begin{proposition}
    For any $n = 2^\tau$ with $\tau \in \natint_{\geq 1}$, the weight matrices of one-peer hyper-cubes defined in~\eqref{def:hypercube-mat} are symmetric, double-stochastic, and have the finite-time consensus property~\eqref{eq:fin-time-cons-cond}.
\end{proposition}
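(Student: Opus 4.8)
The plan is to recognize the one-peer hyper-cube as the degenerate case of the $p$-peer hyper-cuboid in which every prime factor equals $2$, and then invoke the preceding proposition directly. Since $n = 2^\tau$, its prime factorization is $n = p_{\tau-1} \cdots p_1 p_0$ with $p_0 = p_1 = \cdots = p_{\tau-1} = 2$. Under this choice, the $(2, \ldots, 2)$-based multi-base representation of an integer is exactly its binary representation, as already noted in the text, so the generalized bit-wise XOR $\wedge_{2,\ldots,2}$ collapses to the ordinary binary XOR $\wedge$, and the per-factor weight $1/p_{\mathrm{mod}(l,\tau)}$ becomes $1/2$.

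First I would verify that the two weight-matrix definitions coincide entrywise. In~\eqref{def:hypercuboid-mat}, the first case is triggered when $(i \wedge_{2,\ldots,2} j)$ equals the multi-base string $(0,\ldots,0,1,0,\ldots,0)$ carrying a single $1$ in the $\mathrm{mod}(l,\tau)$-th position; reading this string as a binary integer gives precisely $2^{\mathrm{mod}(l,\tau)}$, which is exactly the condition $(i \wedge j) = 2^{\mathrm{mod}(l,\tau)}$ appearing in~\eqref{def:hypercube-mat}. The off-diagonal and diagonal weights are both $1/2$, again matching~\eqref{def:hypercube-mat}. Hence the matrices $\{W^{(l)}\}$ defined by~\eqref{def:hypercube-mat} are \emph{literally} the $p$-peer hyper-cuboid matrices associated with the factorization $(2,\ldots,2)$, not merely analogous to them.

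With this identification in hand, the conclusion is immediate: the previous proposition establishes that the $p$-peer hyper-cuboid matrices are symmetric and doubly stochastic, and that any index sequence $\{l_i\}_{i=0}^{\tau-1}$ with $\{\mathrm{mod}(l_0,\tau),\ldots,\mathrm{mod}(l_{\tau-1},\tau)\} = \{0,\ldots,\tau-1\}$ yields the finite-time consensus identity~\eqref{eq:fin-time-cons-cond}. Taking the natural ordering $l_i = i$ gives the stated claim for one-peer hyper-cubes. Equivalently, one may appeal to the Kronecker representation~\eqref{def:hypercuboid-mat-kron}, under which $W^{(l)} = W^{(l)}(2) \otimes \cdots \otimes W^{(l)}(2)$ with each factor equal to either $I_2$ or $\tfrac{1}{2}\one\one\tran$; over a complete sweep of $\mathrm{mod}(l,\tau)$ through $\{0,\ldots,\tau-1\}$, the product telescopes factorwise so that every slot contributes $\tfrac{1}{2}\one_2\one_2\tran$, giving $\tfrac{1}{n}\one\one\tran$.

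There is essentially no hard part here; the only step demanding care is the notational bookkeeping in the second paragraph, namely confirming that the single-$1$ multi-base string in~\eqref{def:hypercuboid-mat} corresponds under the base-$2$ identification to the power $2^{\mathrm{mod}(l,\tau)}$ in~\eqref{def:hypercube-mat}. Once that correspondence is pinned down, symmetry, double stochasticity, and finite-time consensus are inherited verbatim from the $p$-peer hyper-cuboid result, so the proposition is best treated as a corollary of it rather than proved from scratch.
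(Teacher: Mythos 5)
Your proposal is correct and matches the paper's own proof, which likewise treats the one-peer hyper-cube as the special case of the $p$-peer hyper-cuboid obtained from the factorization $n = 2 \cdot 2 \cdots 2$ and inherits symmetry, double stochasticity, and the finite-time consensus property from the preceding proposition. Your version is in fact slightly more careful than the paper's one-line argument, since you explicitly verify that the single-$1$ multi-base string in~\eqref{def:hypercuboid-mat} corresponds to the condition $(i \wedge j) = 2^{\mathrm{mod}(l,\tau)}$ in~\eqref{def:hypercube-mat}.
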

\begin{proof}
    When $n=2^\tau$, the one-peer hyper-cube is just a special case of $p$-peer hyper-cuboids. Decomposition of $n$ into its $(2, 2, \ldots, 2)$-based representation (or equivalently, binary representation) yields the desired properties.
\end{proof}

\paragraph{Connection with de Bruijn graphs}
The de Bruijn graph was first studied in \cite{Bruijn1946ACP}, and its finite-time consensus property has been established in \cite{delvenne2009optimal}. Here, we present a formulation of de Bruijn graphs closely related to $p$-peer hyper-cuboids~\eqref{def:hypercuboid-mat} and show that de Bruijn graphs are just permutations of $p$-peer hyper-cuboids.
\par
Let $n = p^\tau$ with $(p,\tau) \in \natint_{\geq 2} \times \natint_{\geq 1}$, and define the $p$-based representation of $i \in \{0,1,\ldots,n-1\}$ as $(i_{\tau-1} i_{\tau-2} \ldots i_0)_p$. Then, the $n \times n$ weight matrix of the de Bruijn graph is defined by 
\begin{equation} \label{def:de-bruijn-mat}
    w_{ij} = \begin{cases}
        \tfrac{1}{p} \quad &\text{if } (i_{\tau-2} i_{\tau-3} \ldots i_0)_p = (j_{\tau-1} j_{\tau-2} \ldots j_1)_p \\
        0 &\text{otherwise.}
    \end{cases}
\end{equation} 
The following proposition presents the connection between de Bruijn graphs and $p$-peer hyper-cuboids when the matrix size $n$ is a power of $p \in \natint_{\geq 2}$.
\begin{proposition} \label{prop:de-bruijn-connection}
    Given $n=p^\tau$ with $(p, \tau) \in \natint_{\geq 2} \times \natint_{\geq 1}$, let $W_\mathrm{db} \in \reals^{n \times n}$ be the weight matrix of the de Bruijn graph defined in~\eqref{def:de-bruijn-mat} and $\{W_\mathrm{hc}^{(l)}\}_{l \in \natint}$ be the weight matrices of the $p$-peer hyper-cuboids defined in~\eqref{def:hypercuboid-mat}. Then, for any $l \in \natint$, there exist $n \times n$ permutation matrices $P^{(l)}$ and $Q^{(l)}$ such that
    \begin{equation} \label{eq:de-bruijn-connection}
        W_\mathrm{hc}^{(l)} = P^{(l)} W_\mathrm{db} (Q^{(l)})\tran.
    \end{equation}
\end{proposition}
\begin{proof}
    See \cref{sec:app-de-bruijn}.
\end{proof}

\section{Algorithm description} \label{sec:alg-description}

This section presents Gradient Tracking with time-varying topologies (TV-GT) and our modified version, Gradient Tracking for Finite-Time Consensus Topologies (GT-FT). Henceforth, we will refer to each algorithm as TV-GT and GT-FT, respectively.

\subsection{Gradient tracking with time-varying topologies}

Gradient Tracking (GT) \cite{nedic2017achieving,di2016next} is a well-studied decentralized algorithm for solving the problem~\eqref{eq:prob}, and various formulations of TV-GT exist in the literature. The presented form follows from \cite{di2016next} (called Semi-ATC-TV-GT), and its implementation involves a sequence of graphs $\cG^{(k)} = (\cV, W^{(k)}, \cE^{(k)})$ which models the connections between the group of $n$ agents. Here, $\cV$ is the set of $n$ nodes and $\cE^{(k)} \subseteq \{ (i,j) \mid (i,j) \in \cV \times \cV\}$ describes the set of connections between agents. The set of agents $\cV$ remains static while the set of edges $\cE^{(k)}$ can be time-varying. The entry $w_{ij}^{(k)}$ in the matrix $W^{(k)}$ applies a weighting factor to the parameters sent from agent $i$ to agent $j$. If $w_{ij}^{(k)} = 0$, that means agent $i$ is not a neighbor of agent $j$ in $\cG^{(k)}$; \ie, $(i,j) \notin \cE^{(k)}$.
\par
Given an initial point $\{x_i^{(0)}\} \subset \reals^d$ and stepsize $\alpha \in \reals_{>0}$, set $g_i^{(0)} {=} \nabla F_i(x_i^{(0)}, \xi_i^{(0)})$ for $i=1,\ldots,n$. Then, TV-GT takes the following iterations for $k = 0,1,2, \ldots$
\begin{equation} \label{eq:algo-gt-recursion}
\begin{array}{ll}
    x_i^{(k+1)} &= \displaystyle \sum\limits_{{j \colon (j,i) \in \cE^{(k)}}} w_{ji}^{(k)} (x_j^{(k)} - \alpha g_j^{(k)}) \\ 
    g_i^{(k+1)} &= \displaystyle \sum\limits_{{j \colon (j,i) \in \cE^{(k)}}} w_{ji}^{(k)} g_j^{(k)} + \nabla F_i (x_i^{(k+1)}; \xi_i^{(k+1)}) - \nabla F_i (x_i^{(k)}; \xi_i^{(k)}),
\end{array}
\end{equation} 
where the iterates $g_i^{(k)}$ are often called the \textit{tracking variables}. The TV-GT iterations can be written in a more compact form, which relies on the augmented quantities
\begin{gather}
    \x^{(k)} \define \col\{x^{(k)}_1, \dots, x^{(k)}_n\} \in \reals^{dn}, \qquad
    \g^{(k)} \define \col\{g^{(k)}_1, \dots, g^{(k)}_n\} \in \reals^{dn}, \nonumber \\
    \xbar^{(k)} \define \frac{1}{n} \sum\limits_{i=1}^n x_i^{(k)}, \qquad
    \f(\x) \define \frac{1}{n} \sum\limits_{i=1}^n f_i(x_i), \qquad
    \overline{\nabla f} (\x^{(k)}) \define \frac{1}{n} \sum\limits_{i=1}^n \nabla f_i (x_i^{(k)}), \nonumber \\
    \bar\x^{(k)} \define \ones_n \otimes \xbar^{(k)}, \qquad
    \hat\x^{(k)} \define \x^{(k)} - \bar\x^{(k)}, \label{eq:xhat} \\
    \W^{(k)} \define W^{(k)} \otimes I_d \in \reals^{dn \times dn}, \qquad
    \nabla \f(\x) \define \col\{\nabla f_1(x_1), \dots, \nabla f_n(x_n) \} \in \reals^{dn}, \nonumber \\
    \nabla \F(\x; \bxi) \define \col\{\nabla F_1(x_1; \xi_1), \dots, \nabla F_n (x_n; \xi_n)\} \in \reals^{dn}. \nonumber
\end{gather}
With the augmented quantities, TV-GT~\eqref{eq:algo-gt-recursion} can be written compactly as
\begin{align*}
    \x^{(k+1)} &= \W^{(k)} ( \x^{(k)} - \alpha \g^{(k)} ) \\
    \g^{(k+1)} &= \W^{(k)} \g^{(k)} + \nabla \F (\x^{(k+1)}; \bxi^{(k+1)}) - \nabla \F (\x^{(k)}; \bxi^{(k)}).
\end{align*}
The choice of the matrix sequence $\{W^{(k)}\}$ is critical yet challenging. One focus of this work is to analyze TV-GT in which the network topologies are restricted to topology sequences that satisfy \cref{def:fin-time-cons}, and in particular, we focus on the nonconvex, stochastic setting. This scenario is not encapsulated by existing analysis of TV-GT; see \Cref{sec:related} for an in-depth discussion.

\subsection{Gradient tracking for finite-time consensus topologies}

We present the modified version of TV-GT in \cref{alg:gt}, where a graph sequence satisfying \cref{def:fin-time-cons} is used for communication. The update rules in GT-FT are known in GT literature. However, we are the first, to our knowledge, to propose and analyze this new scheme in which TV-GT is restricted to topology sequences that satisfy \cref{def:fin-time-cons}. TV-GT, as presented in other works (\eg,~\cite{nedic2017achieving, scutari2019distributed}), aims to be as general as possible when considering network topologies. In contrast, we aim to specialize the analysis of GT to leverage the largely unexploited finite-time consensus property. To differentiate between the two approaches, we call our scheme GT-FT.
\begin{algorithm}[!ht]\caption{Gradient Tracking for Finite-Time Consensus Topologies (GT-FT)} \label{alg:gt}
\begin{algorithmic}[1]
\State Agent $i$ input: $x_i^{(0)} \in \reals^d$ and stepsize $\alpha \in \reals_{>0}$.
\State Global input: The parameter $\tau \in \natint_{\geq 1}$ for finite-time consensus, and the sequence of matrices $\{W^{(l)}\}$ that satisfies Definition~\ref{def:fin-time-cons}.
\State Initialize $g_i^{(0)} = \nabla F_i(x_i^{(0)}, \xi_i^{(0)}) \in \reals^d$, $i=1,\ldots,n$.
\For{$k = 0, 1, 2, \ldots$}
    \For{$i = 1, \ldots, n$ (in parallel)}
        \begin{subequations}
        \begin{align}
            w_{ij}^{(k)} &=  W^{(\mathrm{mod} (k, \tau))}[i,j], \ \ \text{for all} \ j=1,\ldots,n \\
            x_i^{(k+1)} &= \sum\limits_{{j \colon (j,i) \in \cE^{(k)}}} {w_{ji}^{(k)}} (x_j^{(k)} - \alpha g_j^{(k)}) \label{eq:alg-x} \\
            g_i^{(k+1)} &= \sum\limits_{{j \colon (j,i) \in \cE^{(k)}}} {w_{ji}^{(k)}} g_j^{(k)} + \nabla F_i (x_i^{(k+1)}; \xi_i^{(k+1)}) - \nabla F_i (x_i^{(k)}; \xi_i^{(k)}). \label{eq:alg-g}
        \end{align}
        \end{subequations}
    \EndFor
\EndFor
\end{algorithmic}
\end{algorithm}
\par
Our analysis of GT-FT is presented in \Cref{sec:analysis}, and extensively uses the compact, networked form of \cref{alg:gt} (with the help of the augmented quantities):
\begin{subequations}
\begin{align}
    \W^{(k)} &= W^{(\mathrm{mod} (k, \tau))}  \otimes I_d \\ 
    \x^{(k+1)} &= \W^{(k)} (\x^{(k)} - \alpha \g^{(k)}) \label{alg:gt-x} \\
    \g^{(k+1)} &= \W^{(k)} \g^{(k)} + \nabla \F (\x^{(k+1)}; \bxi^{(k+1)}) - \nabla \F (\x^{(k)}; \bxi^{(k)}). \label{alg:gt-g}
\end{align}
\end{subequations}

\section{Algorithm analysis} \label{sec:analysis}

This section presents the convergence analysis of \cref{alg:gt}. The assumptions needed for the analysis are listed in \Cref{sec:analysis-ass}. In particular, we do not assume convexity, and we only have access to stochastic gradient estimates of each local function $f_i$. The convergence results are presented in \Cref{sec:analysis-result}, with detailed proofs postponed to \Cref{sec:app-analysis}.

\subsection{Assumptions} \label{sec:analysis-ass}

In this subsection, we list all the assumptions needed for analysis. Our analysis does not need convexity of the objective function and holds for general nonconvex problems in the form of~\eqref{eq:prob}. We start with the following assumption on the problem~\eqref{eq:prob}.
\begin{assumption} \label{ass:smooth}
Each function $f_i \colon \reals^d \to \reals$, $i=1,\ldots,n$ is continuously differentiable with an $L$-Lipschitz continuous gradient: there exists $L \in \reals_{>0}$ such that
\[
    \|\nabla f_i(x) - \nabla f_i(y)\| \leq L \|x-y\|, \ \ \text{for all} \ x,y \in \intr \dom f_i, \ \text{and for all} \ i = 1,\ldots,n.
\]
In addition, the objective function $f \colon \reals^d \to \reals$ is bounded below, and the optimal value of the problem~\eqref{eq:prob} is denoted by $f^\star \in \reals$.
\end{assumption}
\par
At each iteration of \cref{alg:gt}, a stochastic gradient estimator of each component function~$f_i$ is computed, based on the random variable $\xi_i^{(k)}$ in the probability space $(\Omega_i, \cF_i, \mathbb P_i)$. Given initial conditions, let~$\cF^{(0)}$ denote the $\sigma$-algebra corresponding to the initial conditions and, for all $k \in \natint_{\geq 1}$, let $\cF^{(k)}$ denote the $\sigma$-algebra defined by the initial conditions and the random variables $\{\x^{(1)}, \ldots, \x^{(k)}\}$, and denote the expectation conditioned on $\cF^{(k)}$ by $\Ex_k [\cdot] \define \Ex[\cdot \mid \cF^{(k)}]$.
\par
The following assumption is made on the stochastic gradient estimator.
\begin{assumption} \label{ass:noise}
For all $k \in \natint$ and for all $i=1,\ldots,n$, the random variables~$\xi_i^{(k)}$ are independent of each other. The stochastic gradient estimator satisfies
\[
    \Ex_k [\nabla F_i(x_i^{(k)}; \xi_i^{(k)})] = \nabla f_i(x_i^{(k)}), \quad \text{for all} \ i=1,\ldots,n.
\]
Moreover, there exists $\sigma \in \reals_{>0}$ such that for all $k \in \natint$ and $i = 1,\ldots,n$, it holds that
\[
    \Ex_k [\|\nabla F_i(x_i^{(k)}; \xi_i^{(k)}) - \nabla f_i (x_i^{(k)})\|^2] \leq \sigma^2.
\]
\end{assumption}

\subsection{Convergence analysis for \texorpdfstring{\cref{alg:gt}}{Algorithm~1}} \label{sec:analysis-result}

This section presents the convergence results of \cref{alg:gt}. To leverage the finite-time consensus property, we analyze the evolution of iterates over $\tau$ iterations and demonstrate that part of the consensus error vanishes after $\tau$ iterations. To see this, we rewrite the algorithm recursion \eqref{alg:gt-x}--\eqref{alg:gt-g} more compactly as
\[
    \begin{bmatrix} \x^{(k+1)} \\ \g^{(k+1)} \end{bmatrix} = 
    \begin{bmatrix} \W^{(k)} & - \alpha \W^{(k)} \\ 0 & \W^{(k)} \end{bmatrix}
    \begin{bmatrix} \x^{(k)} \\ \g^{(k)} \end{bmatrix} + 
    \begin{bmatrix} 0 \\ \nabla \F^{(k+1)} - \nabla \F^{(k)} \end{bmatrix},
\]
where we define $\nabla \F^{(k)} \define \nabla \F(\x^{(k)}, \xi^{(k)})$ for the ease of presentation. Then, we have 
\[ \resizebox{0.95\hsize}{!}{$\displaystyle
    \begin{bmatrix} \W^{(i)} & - \alpha \W^{(i)} \\ 0 & \W^{(i)} \end{bmatrix} \cdots
    \begin{bmatrix} \W^{(j)} & - \alpha \W^{(j)} \\ 0 & \W^{(j)} \end{bmatrix} =
    \begin{bmatrix} \coprod_{k=j}^i \W^{(k)} & - \alpha (i-j+1) \coprod_{k=j}^i \W^{(k)} \\ 0 & \coprod_{k=j}^i \W^{(k)} \end{bmatrix}.$}
\]
Consequently, the $\x$-update \eqref{alg:gt-x} can be written as
\[
    \x^{(k)} = \left(\coprod\limits_{i=k-1}^{0} \W^{(i)} \right) \x^{(0)} - \alpha \sum\limits_{j = 0}^{k-1} (k-j) \coprod\limits_{i=k-1}^{j} \W^{(i)} \big(\nabla \F^{(j)} - \nabla \F^{(j-1)} \big),
\]
where for convenience we set $\nabla \F^{(-1)} = 0$. Recall the finite-time consensus parameter $\tau \in \natint_{\geq 1}$ from \cref{def:fin-time-cons}, and assume $k - 1 \geq \tau$. Thus, at least one pass through the topology sequence satisfying \cref{def:fin-time-cons} has been performed. Then, we have
\begin{align*}
    \x^{(k)} &= \left(\coprod\limits_{i=k-1}^{0} \W^{(i)} \right) \hat{\x}^{(0)} - \alpha \sum\limits_{j = 0}^{k-1-\tau} (k-j) \left(\coprod\limits_{i=k-1}^{j} \W^{(i)} \right) \big(\nabla \F^{(j)} - \nabla \F^{(j-1)} \big) \nonumber \\
    &\phantom{=} \ - \alpha \sum\limits_{j = k - \tau}^{k-1} (k-j) \left(\coprod\limits_{i=k-1}^{j} \W^{(i)} \right) \big(\nabla \F^{(j)} - \nabla \F^{(j-1)} \big).
\end{align*}
Multiplying $\widehat{\I} \define I_{dn} - \tfrac{1}{n} \ones_n \ones_n \tran \otimes I_d$ on both sides yields 
\begin{align}
    \hat{\x}^{(k)} &\define \x^{(k)} - \bar\x^{(k)} = (I_{dn} - \tfrac{1}{n} \ones_n \ones_n \tran \otimes I_d) \x^{(k)} \nonumber \\
    &= \left(\coprod\limits_{i=k-1}^{0} \widehat{\W}^{(i)} \right) \hat{\x}^{(0)} - \alpha \sum\limits_{j = 0}^{k-1-\tau} (k-j) \left(\coprod\limits_{i=k-1}^{j} \widehat{\W}^{(i)} \right) \big(\nabla \F^{(j)} - \nabla \F^{(j-1)} \big) \nonumber \\
    &\phantom{=} \ - \alpha \sum\limits_{j = k - \tau}^{k-1} (k-j) \left(\coprod\limits_{i=k-1}^{j} \widehat{\W}^{(i)} \right) \big(\nabla \F^{(j)} - \nabla \F^{(j-1)} \big), \label{eq:lem-consensus-xhat-1-new}
\end{align}
where we use the definition of $\hat{\x}^{(k)}$ in \eqref{eq:xhat} and denote $\widehat{\W}^{(i)} \define \W^{(i)} - \tfrac{1}{n} \ones_n \ones_n \tran \otimes I_d$ for all $i \in \natint$. Recall that \Cref{alg:gt} cycles through a topology sequence satisfying \cref{def:fin-time-cons} repeatedly in the same order. So the term $\coprod_{i=k-1}^{j} \widehat{\W}^{(i)}$, for $j=0,1,\ldots,k-1-\tau$ include a products of $\widehat{\W}^{(i)}$ that consists of a complete cycle of a topology sequence satisfying~\cref{def:fin-time-cons}. Hence, by~\eqref{eq:fin-time-cons-residual}, the first two terms in~\eqref{eq:lem-consensus-xhat-1-new} are zero and $\hat\x^{(k)}$ in \eqref{eq:lem-consensus-xhat-1-new} reduces to
\begin{equation} \label{eq:lem-consensus-xhat-1}
    \hat{\x}^{(k)} = -\alpha \sum\limits_{j = k - \tau}^{k-1} (k-j) \coprod\limits_{i=k-1}^{j} \widehat{\W}^{(i)} \big(\nabla \F^{(j)} - \nabla \F^{(j-1)} \big),
\end{equation}
since by assumption $k - 1 \geq \tau$.

With the concise presentation of $\hat{\x}^{(k)}$, the rest of the analysis is typical in the literature and relies on two important inequalities. The \textit{descent inequality} helps establish the convergence property of the averaged iterates~$\bar x^{(k)}$. By exploiting the new presentation \eqref{eq:lem-consensus-xhat-1}, the \textit{consensus inequality} reveals the per-iteration behavior of the consensus error~$\hat \x^{(k)}$, and will be used to show that each agent's parameter $x_i^{(k)}$ converges to the average~$\bar x^{(k)}$.
\begin{lemma}[Descent inequality] \label{lem:descent}
Let \cref{ass:smooth,ass:noise} hold, let the mixing matrices $\{W^{(l)}\}_{l=0}^{\tau-1} \subset \reals^{n \times n}$ satisfy \cref{def:fin-time-cons}, and let the stepsize $\alpha$ satisfy $\alpha \in (0, \tfrac{1}{2L}]$. Then, the sequence $\{\x^{(k)}\}$ generated by \cref{alg:gt} satisfies
\[
    \resizebox{0.95\hsize}{!}{$\displaystyle \Ex \lVert \overline{\nabla f} (\x^{(k)}) \rVert^2 + \Ex \lVert \nabla f(\bar{x}^{(k)}) \rVert^2 \leq \frac{4}{\alpha} \left( \Ex \ftilde(\xbar^{(k)}) - \Ex \ftilde(\xbar^{(k+1)}) \right) + \frac{2 L^2}{n} \Ex \lVert \hat{\x}^{(k)} \rVert^2 + \frac{2\alpha L \sigma^2}{n},$}
\]
for all $k \in \natint$, where $\ftilde \define f - f^\star$.
\end{lemma}
The left-hand side of the inequality in \cref{lem:descent} is the (expected) gradient norm, which aligns with our main convergence result (see \cref{thm:GT-conv}). Such a convergence result is common in stochastic unconstrained optimization; see, \eg, \cite{bertsekas00gradient}, which analyzes (centralized) Stochastic Gradient methods (SGD). 
\par
The second lemma is on the consensus inequality and establishes that all agents' parameters converge to their average. 
\begin{lemma}[Consensus inequality] \label{lem:consensus}
Let \cref{ass:smooth,ass:noise} hold, let the mixing matrices $\{W^{(l)}\}_{l=0}^{\tau-1} \subset \reals^{n \times n}$ satisfy \cref{def:fin-time-cons}, and let the stepsizes satisfy $\alpha \in \big(0, \frac{1}{4 \sqrt{6} \tau^2 L} \big]$. Then, for all $T \in \natint_{\geq \tau}$, the sequence $\{\x^{(k)}\}$ generated by \cref{alg:gt} satisfies
\begin{align}
    \frac{1}{T+1} \sum_{k=0}^{T} \Ex \lVert \hat{\x}^{(k)} \rVert^2 &\leq \frac{2}{T+1} \sum_{k=0}^{\tau} \Ex \lVert \hat{\x}^{(k)} \rVert^2 + \frac{48 \alpha^4 \tau^4 n L^2}{T+1} \sum_{k=0}^T \Ex \lVert \nabla f(\bar{x}^{(k)}) \rVert^2 \nonumber \\
    &\phantom{\leq} \ + (12 \alpha^4 \tau^4 L^2 + 16 \alpha^2 \tau^3 n) \sigma^2. \label{eq:conv-consensus}
\end{align}
\end{lemma}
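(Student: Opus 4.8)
The plan is to start from the single-line recursion~\eqref{eq:GT-single-line-recursion} and exploit the finite-time consensus property to collapse it into a short, sliding-window sum. Writing $\mathbf J \define \tfrac1n\one\one\tran\otimes I_d$ so that $\hat\x^{(k)} = (\mathbf I - \mathbf J)\x^{(k)}$, I would left-multiply~\eqref{eq:GT-single-line-recursion} by $\mathbf I - \mathbf J$. The key observation is that, because the algorithm cycles through $W^{(0)},\dots,W^{(\tau-1)}$ periodically, any $\tau$ consecutive indices form a complete residue system modulo $\tau$; by the permutation invariance established in Proposition~\ref{lem:exp-fin-time-cons} (and its hyper-cuboid analogue), the product of any $\tau$ consecutive mixing matrices equals $\mathbf J$. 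Hence, for $k \geq \tau$, the block $\coprod_{i=k-1}^{j}\W^{(i)}$ equals $\mathbf J$ whenever $k - j \geq \tau$ (group the leftmost $\tau$ factors, and use that the remaining doubly stochastic product is absorbed by $\mathbf J$), so $(\mathbf I - \mathbf J)\coprod_{i=k-1}^{j}\W^{(i)} = 0$ for all such $j$, and likewise the leading term in $\x^{(0)}$ vanishes. This leaves only the window $j \in \{k-\tau+1,\dots,k-1\}$:
\[
    \hat\x^{(k)} = -\alpha\sum_{j=k-\tau+1}^{k-1}(k-j)\,(\mathbf I - \mathbf J)\Big(\coprod_{i=k-1}^{j}\W^{(i)}\Big)\big(\nabla\F(\x^{(j)};\bxi^{(j)}) - \nabla\F(\x^{(j-1)};\bxi^{(j-1)})\big).
\]

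Next I would bound the squared norm. Since each $W^{(l)}$ is doubly stochastic, $\|\coprod_i\W^{(i)}\|_2 \le 1$ and $\|\mathbf I - \mathbf J\|_2 = 1$, so a weighted Cauchy--Schwarz inequality on the window (using $\sum_{m=1}^{\tau-1} m < \tau^2/2$) gives $\|\hat\x^{(k)}\|^2 \le \tfrac{\alpha^2\tau^2}{2}\sum_{j}(k-j)\|\mathbf d^{(j)}\|^2$, where $\mathbf d^{(j)} \define \nabla\F(\x^{(j)};\bxi^{(j)}) - \nabla\F(\x^{(j-1)};\bxi^{(j-1)})$. I would then split $\mathbf d^{(j)}$ into a stochastic part $\mathbf n^{(j)} - \mathbf n^{(j-1)}$, with $\mathbf n^{(j)} \define \nabla\F(\x^{(j)};\bxi^{(j)}) - \nabla\f(\x^{(j)})$, and a deterministic part $\nabla\f(\x^{(j)}) - \nabla\f(\x^{(j-1)})$. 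Taking expectations and using the filtration $\{\cF^{(k)}\}$ from Assumption~\ref{ass:noise}, the cross terms between $\mathbf n^{(j)}$ and any $\cF^{(j)}$-measurable quantity vanish, while independence across agents bounds the variance contributions by multiples of $n\sigma^2$ (raw noise) or $\sigma^2/n$ (averaged noise). Crucially, the deterministic part is controlled by $L$-smoothness through the per-step change: $\|\nabla\f(\x^{(j)}) - \nabla\f(\x^{(j-1)})\| \le L\|\x^{(j)} - \x^{(j-1)}\|$.

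The per-step change is then decomposed orthogonally as $\|\x^{(j)} - \x^{(j-1)}\|^2 = \|\hat\x^{(j)} - \hat\x^{(j-1)}\|^2 + \|\bar\x^{(j)} - \bar\x^{(j-1)}\|^2$, the consensus part being bounded by $2\|\hat\x^{(j)}\|^2 + 2\|\hat\x^{(j-1)}\|^2$. For the mean part, the gradient-tracking invariant $\bar g^{(k)} = \overline{\nabla F}(\x^{(k)};\bxi^{(k)})$ (which follows by induction from double stochasticity and the initialization) gives $\bar x^{(j)} - \bar x^{(j-1)} = -\alpha\,\overline{\nabla F}(\x^{(j-1)};\bxi^{(j-1)})$, so $\|\bar\x^{(j)} - \bar\x^{(j-1)}\|^2 = n\alpha^2\|\overline{\nabla F}(\x^{(j-1)})\|^2$; decomposing the averaged stochastic gradient into $\nabla f(\bar x^{(j-1)})$, a consensus-induced term bounded via smoothness by $\tfrac{L}{\sqrt n}\|\hat\x^{(j-1)}\|$, and averaged noise of size $\sigma^2/n$, yields a bound in the three desired quantities. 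Assembling everything, $\Ex\|\mathbf d^{(j)}\|^2$ is dominated by $n\sigma^2 + L^2\big(\|\hat\x^{(j)}\|^2 + \|\hat\x^{(j-1)}\|^2\big) + n\alpha^2 L^2\|\nabla f(\bar x^{(j-1)})\|^2$. Summing the consensus bound over $k = \tau,\dots,T$ and counting that each index $j$ lies in at most $\tau$ windows with weight at most $\tau$, the consensus-error terms acquire a coefficient of order $\alpha^2\tau^4 L^2$, which the stepsize condition $\alpha \le \tfrac{1}{4\sqrt3\,\tau^2 L}$ (equivalently $\alpha^2\tau^4 L^2 \le \tfrac{1}{48}$) makes small enough to move to the left-hand side; the indices $j < \tau$ that cannot be absorbed remain as the initial term $\tfrac{2}{T+1}\sum_{k=0}^{\tau}\Ex\|\hat\x^{(k)}\|^2$, and dividing by $T+1$ produces~\eqref{eq:conv-consensus}.

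I expect the main obstacle to be the self-referential bookkeeping in the final step: the bound on $\Ex\|\hat\x^{(k)}\|^2$ feeds consensus errors $\|\hat\x^{(j)}\|^2$ at neighboring iterations back into itself, and one must track the exact powers of $\tau$ and the factors of $n$ through the window-overlap counting so that the self-referential consensus coefficient is strictly below one after invoking the stepsize bound, and so that the residual coefficients match $\tfrac{n}{96\tau^4 L^2}$ on the gradient term and $\tfrac{1}{384\tau^4 L^2} + \tfrac{n}{6\tau L^2}$ on the noise floor. A secondary but delicate point is that the stochastic terms must be handled through their martingale-difference (orthogonality) structure rather than a crude triangle inequality; otherwise the noise floor picks up spurious factors of $\tau$ and the stated $1/\tau$ scaling in the dominant noise term is lost.
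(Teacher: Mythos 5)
Your proposal mirrors the paper's own proof in every structural respect: you start from the single-line recursion~\eqref{eq:GT-single-line-recursion}, project by $\widehat{\I}$, use finite-time consensus to collapse the history into a short sliding window, split the gradient increments into a deterministic part (controlled by $L$-smoothness together with the tracking invariant $\bar{g}^{(k)} = \overline{\nabla f}(\x^{(k)}) + \bar{s}^{(k)}$ --- exactly the content of Lemma~\ref{lem:diff-of-grad}, which you re-derive rather than cite) and a martingale noise part, and finish with the window-overlap counting and absorption of the self-referential consensus terms under $\alpha \le \tfrac{1}{4\sqrt{3}\tau^2 L}$. You also correctly flag that the noise must be handled through orthogonality rather than a crude triangle inequality, since otherwise the $\tfrac{n}{6\tau L^2}\sigma^2$ term degrades to order $\tfrac{n}{L^2}\sigma^2$.

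The one genuine gap is your collapse step. You discard every term with $k-j \ge \tau$ on the grounds that any $\tau$ consecutive iterations form a complete residue system modulo $\tau$ and, ``by permutation invariance,'' the corresponding product equals $\tfrac1n\one\one\tran \otimes I_d$. That invariance is a special feature of the families in Section~\ref{sec:fin-time-cons}, whose matrices commute (circulant structure in~\eqref{def:exp-mat}, common Kronecker structure in~\eqref{def:hypercuboid-mat-kron}); it is \emph{not} implied by Definition~\ref{def:fin-time-cons}, which is the only hypothesis of the lemma. Concretely, for $\tau = 2$ and $n \ge 3$, take orthonormal $p, q \perp \one$ and set $W^{(1)} = \tfrac1n\one\one\tran + \epsilon\, p q\tran$, $W^{(0)} = \tfrac1n\one\one\tran + \epsilon\, p p\tran$ with $\epsilon \in (0, \tfrac1n)$: both are doubly stochastic and $W^{(1)}W^{(0)} = \tfrac1n\one\one\tran$, yet the mid-cycle product $W^{(0)}W^{(1)} = \tfrac1n\one\one\tran + \epsilon^2 p q\tran$ is not exact averaging, so the term you drop at, say, $k=3$, $j=1$ does not vanish. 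The paper avoids relying on rotation invariance: it keeps the longer window $j \in \{k-\tau, \dots, k-1\}$ and discards only products long enough to contain a complete \emph{aligned} pass $\W^{((m+1)\tau-1)}\cdots\W^{(m\tau)} = \tfrac1n\one\one\tran \otimes I_d$, which is what cycling through the sequence ``in the same order'' guarantees without any commutativity (strictly, for $\tau \ge 3$ guaranteeing an aligned pass inside every discarded window requires length up to $2\tau - 1$ rather than $\tau+1$, but this affects only constants). Adopting that version --- a window of $\tau$ rather than $\tau-1$ terms --- changes only your bookkeeping constants and leaves the rest of your argument intact; as written, however, your proof covers only the commuting families, not arbitrary sequences satisfying Definition~\ref{def:fin-time-cons}.
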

Note that the first summation term on the right-hand side of~\eqref{eq:conv-consensus} relies on the first $\tau+1$ iterates of~$\hat \x^{(k)}$, and recall that $\tau$ is a prescribed constant for \cref{alg:gt}. Hence, the term $\sum_{k=0}^{\tau} \Ex \lVert \hat{\x}^{(k)} \rVert^2$ can be viewed as a constant when we study the asymptotic behavior of the consensus error.
\par
The consensus inequality~\eqref{eq:conv-consensus} is used in tandem with the descent inequality to show that the consensus error in the result of \cref{lem:descent} vanishes asymptotically. Hence, not only does the averaged parameter asymptotically reach a stationary point of~\eqref{eq:prob} biased by stochastic noise, but all the agents' parameters also converge (because they reach a consensus). \Cref{thm:GT-conv} formally presents this result.
\begin{theorem} \label{thm:GT-conv}
    Let \cref{ass:smooth,ass:noise} hold, let the mixing matrices $\{W^{(l)}\}_{l=0}^{\tau-1} \subset \reals^{n \times n}$ satisfy \cref{def:fin-time-cons}, and let the stepsize satisfy $\alpha \in \big(0, \frac{1}{4 \sqrt{6} \tau^2 L} \big]$. Then, for all $T \in \natint_{\geq \tau}$, the sequence $\{\x^{(k)}\}$ generated by \cref{alg:gt} satisfies 
    \[
        \frac{1}{T+1}  \sum_{k=0}^{T} \left(\Ex \lVert \overline{\nabla f} (\x^{(k)}) \rVert^2+ \Ex \lVert \nabla f(\xbar^{(k)}) \rVert^2 \right)
        \leq \frac{\gamma_1}{\alpha T} + \frac{\gamma_2 L^2}{n T} + \frac{\gamma_3 \alpha^4 \tau^4 L^4 \sigma^2}{n} + \gamma_4 \alpha^2\tau^3 L^2\sigma^2 + \frac{\gamma_5 \alpha L \sigma^2}{n}
    \]
    with some constants $(\gamma_1, \gamma_2, \gamma_3, \gamma_4, \gamma_5) \in \reals_{>0}^5$.
\end{theorem}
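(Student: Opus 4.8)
The plan is to combine the descent inequality (Lemma~\ref{lem:descent}) and the consensus inequality (Lemma~\ref{lem:consensus}) by averaging the descent inequality over $k=0,\ldots,T$ and substituting the consensus bound to eliminate the consensus-error term $\frac{2L^2}{n}\Ex\lVert\hat\x^{(k)}\rVert^2$ that couples the two. This is the standard two-sequence argument for decentralized methods: the descent inequality controls the averaged iterate up to a consensus-error penalty, while the consensus inequality bounds that penalty in terms of the gradient norm (with a small enough coefficient to be reabsorbed) plus noise. The main technical bookkeeping will be choosing the stepsize $\alpha$ small enough that the gradient-norm term coming from the consensus inequality can be absorbed into the left-hand side without destroying the leading constant.

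\medskip

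First I would average~\eqref{eq:conv-descent} over $k = 0,\ldots,T$, producing on the left the quantity we want to bound and on the right a telescoping term $\frac{4}{\alpha(T+1)}\big(\Ex\tilde f(\bar x^{(0)}) - \Ex\tilde f(\bar x^{(T+1)})\big) \le \frac{4}{\alpha(T+1)}\big(\tilde f(\bar x^{(0)}) - f^\ast\big)$ (using the lower bound from Assumption~\ref{ass:smooth}), plus the averaged consensus penalty $\frac{2L^2}{n}\cdot\frac{1}{T+1}\sum_k \Ex\lVert\hat\x^{(k)}\rVert^2$ and the noise term $\frac{2\alpha L\sigma^2}{n}$. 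Next I would invoke Lemma~\ref{lem:consensus} to bound $\frac{1}{T+1}\sum_{k=\tau}^{T}\Ex\lVert\hat\x^{(k)}\rVert^2$; the first $\tau+1$ consensus terms are a constant and can be folded into the $O(1/T)$ term, while the crucial feature is that the gradient-norm contribution in~\eqref{eq:conv-consensus} carries the tiny coefficient $\frac{n}{96(T+1)\tau^4L^2}$. After multiplying by $\frac{2L^2}{n}$, this contributes $\frac{1}{48\tau^4}\cdot\frac{1}{T+1}\sum_k\Ex\lVert\nabla f(\bar x^{(k)})\rVert^2$, which is a small fraction of the term already on the left-hand side and is therefore absorbed.

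\medskip

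After the absorption step, the right-hand side collects into three pieces matching the claimed rate: the telescoping and initial-consensus terms give $\frac{\gamma_1\tau^2 L^2}{T}$ once the stepsize is fixed at its largest admissible value $\alpha = \frac{1}{4\sqrt3\,\tau^2 L}$ (so that $\frac{4}{\alpha}=O(\tau^2 L)$), the consensus-noise term $\frac{2L^2}{n}\big(\frac{1}{384\tau^4L^2}+\frac{n}{6\tau L^2}\big)\sigma^2$ produces the $\frac{\gamma_2\sigma^2}{\tau}$ and $\frac{\gamma_3\sigma^2}{\tau^2 n}$ contributions, and the descent-noise term $\frac{2\alpha L\sigma^2}{n} = O\!\big(\frac{\sigma^2}{\tau^2 n}\big)$ merges into $\frac{\gamma_3\sigma^2}{\tau^2 n}$. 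Collecting the absolute constants into $\gamma_1,\gamma_2,\gamma_3$ finishes the proof.

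\medskip

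\textbf{The hard part} I expect is not the algebra but verifying the consistency of the stepsize constraints: Lemma~\ref{lem:descent} requires $\alpha\le\frac{1}{2L}$ whereas Lemma~\ref{lem:consensus} and the theorem require $\alpha\le\frac{1}{4\sqrt3\,\tau^2 L}$, so I must check the latter implies the former (it does, since $\tau\ge1$) and confirm that the coefficient of the absorbed gradient term is genuinely smaller than the left-hand side coefficient $1$, so that the subtraction leaves a positive fraction (here $\frac{1}{48\tau^4}\ll 1$, so this is comfortable). The only subtlety is ensuring the constant $\frac{2}{T+1}\sum_{k=0}^{\tau}\Ex\lVert\hat\x^{(k)}\rVert^2$ is genuinely $O(1/T)$, which requires a separate a priori bound on the early consensus errors; I would either cite such a bound from the appendix or note that the first $\tau+1$ iterates are controlled deterministically by the initialization and a single pass of the finite-time consensus sequence.
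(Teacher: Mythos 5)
Your proposal is correct and follows essentially the same route as the paper's proof: average the descent inequality (Lemma~\ref{lem:descent}) over $k=0,\dots,T$, telescope, substitute the consensus bound, absorb the resulting gradient-norm term into the left-hand side using the small coefficient guaranteed by $\alpha \le \tfrac{1}{4\sqrt{3}\,\tau^2 L}$, and collect the remaining terms into the claimed three-term rate. The only cosmetic difference is that the paper substitutes an intermediate, $\alpha$-explicit inequality from the proof of Lemma~\ref{lem:consensus} (absorption factor $1-48\alpha^4\tau^4L^4 \ge \tfrac{1}{2}$) rather than the lemma's stated form (factor $1-\tfrac{1}{48\tau^4}$), which changes nothing; your treatment of the first $\tau+1$ consensus terms as an $O(1/T)$ constant likewise matches the paper's.
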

The numeric constants $(\gamma_1, \gamma_2, \gamma_3, \gamma_4, \gamma_5)$, together with the proof of \cref{thm:GT-conv}, are provided in \Cref{sec:app-analysis}.
\par
Then, we tune the stepsize $\alpha$ using the technique described in \cite{koloskova2020unified, stich2019unified}.
Also, similarly to~\cite{NEURIPS2021_74e1ed8b}, we weaken the rate's dependence on $L$ by imposing a warm-up strategy (\eg, AllReduce \cite{assran2019stochastic}) to force all agents' parameters in the first $\tau+1$ iterations to be the same. This leads us to the following corollary.
\begin{corollary} \label{cor:warm-up}
    Let \cref{ass:smooth,ass:noise} hold, let $\{W^{(l)}\}_{l=0}^{\tau-1}\subset \reals^{n \times n}$ satisfy \cref{def:fin-time-cons}, and let
    \[
        \alpha = \min \big\{\big(\tfrac{1}{c_1 T}\big)^{\frac{1}{2}}, \big(\tfrac{1}{c_2 T}\big)^{\frac{1}{3}}, \tfrac{1}{4 \sqrt{6} \tau^2 L}\big\}, \quad \text{where} \ c_1 \define \tfrac{L \sigma^2}{n}, c_2 \define \tau^3 L^2 \sigma^2.
    \]
    Suppose a warm-up strategy (\eg, AllReduce) is applied so that all agents' parameters and tracking variables in the first period are the same: $\sum_{k=0}^{\tau} \Ex \lVert \hat\x^{(k)} \rVert^2 = 0$. Then, for all $T \in \natint_{\geq \tau}$, the sequence $\{\x^{(k)}\}$ generated by \cref{alg:gt} satisfies 
    \[
        \frac{1}{T+1} \sum_{k=0}^{T} \left( \Ex \lVert \overline{\nabla f} (\x^{(k)}) \rVert^2 + \Ex \lVert \nabla f(\bar{x}^{(k)}) \rVert^2\right) \leq \frac{\gamma_6 \tau^2 L}{T} + \gamma_7 \tau \left(\frac{L \sigma}{T}\right)^{\frac{2}{3}} + \gamma_8 \left(\frac{L \sigma^2}{n T}\right)^{\frac{1}{2}},
    \]
for some constants $(\gamma_6, \gamma_7, \gamma_8) \in \reals_{>0}^3$.
\end{corollary}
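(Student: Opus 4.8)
The plan is to obtain Corollary~\ref{cor:warm-up} not from the \emph{stated} bound of Theorem~\ref{thm:GT-conv} --- which retains a non-vanishing floor $\tfrac{\gamma_2 \sigma^2}{\tau}$ and hence cannot produce a rate tending to $0$ --- but from the $\alpha$-explicit ``master inequality'' underlying it. Concretely, I would re-run the combination of the descent inequality (Lemma~\ref{lem:descent}) and the consensus inequality (Lemma~\ref{lem:consensus}) while keeping the stepsize $\alpha$ as a free parameter, and then invoke the standard stepsize-tuning argument (as in \citet{koloskova2020unified, stich2019unified}). As in \citet{NEURIPS2021_74e1ed8b}, the warm-up hypothesis $\sum_{k=0}^{\tau}\Ex\lVert\hat\x^{(k)}\rVert^2 = 0$ enters at exactly one place and is the whole point: it annihilates the initial-consensus-error contribution, which is the only term carrying an extra factor of $L^2$.

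First I would telescope Lemma~\ref{lem:descent} over $k = 0,\ldots,T$ and divide by $T+1$, so that the successive differences of $\tilde f(\bar x^{(k)})$ collapse to $\tfrac{4}{\alpha(T+1)}(\Ex\tilde f(\bar x^{(0)}) - \Ex\tilde f(\bar x^{(T+1)}))$, a quantity of order $\tfrac{1}{\alpha(T+1)}$ since $\tilde f$ is bounded below. Writing $\Psi(T) \define \tfrac{1}{T+1}\sum_{k=0}^{T}(\Ex\lVert\overline{\nabla f}(\x^{(k)})\rVert^2 + \Ex\lVert\nabla f(\bar x^{(k)})\rVert^2)$, this yields
\[
    \Psi(T) \le \frac{c_0}{\alpha(T+1)} + \frac{2L^2}{n(T+1)}\sum_{k=0}^{T}\Ex\lVert\hat\x^{(k)}\rVert^2 + \frac{2\alpha L\sigma^2}{n},
\]
with $c_0 = O(1)$. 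Next I would split the consensus sum at $k=\tau$, bound $\sum_{k=\tau}^{T}$ by the $\alpha$-explicit form of Lemma~\ref{lem:consensus}, and use the warm-up identity to discard $\sum_{k=0}^{\tau}\Ex\lVert\hat\x^{(k)}\rVert^2 = 0$. The gradient term returned by the consensus bound carries a coefficient strictly below $1$ (it is $O(\tau^{-4})$ after multiplication by $\tfrac{2L^2}{n}$), so it is absorbed into $\Psi(T)$ on the left, while the noise term contributes $O(\alpha^2\tau^3 L^2\sigma^2)$. The outcome is the master inequality $\Psi(T) \le \tfrac{c_0}{\alpha(T+1)} + c_1\alpha + c_2\alpha^2$ with $c_0 = O(1)$, $c_1 = \tfrac{L\sigma^2}{n}$, and $c_2 = \tau^3 L^2\sigma^2$, valid for every $\alpha \in (0, \tfrac{1}{4\sqrt3\tau^2 L}]$.

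With this inequality in hand I would apply the tuning lemma: choosing $\alpha = \min\{(\tfrac{c_0}{c_1 T})^{1/2}, (\tfrac{c_0}{c_2 T})^{1/3}, \tfrac{1}{2L}, \tfrac{1}{4\sqrt3\tau^2 L}\}$ balances the three terms and gives $\Psi(T) = O(\tfrac{c_0}{\alpha_{\max}T} + (\tfrac{c_0 c_1}{T})^{1/2} + (\tfrac{c_0^2 c_2}{T^2})^{1/3})$, where $\alpha_{\max} = \tfrac{1}{4\sqrt3\tau^2 L} \le \tfrac{1}{2L}$ for $\tau\ge1$. Substituting $c_0 = 1$, $c_1 = \tfrac{L\sigma^2}{n}$, $c_2 = \tau^3 L^2\sigma^2$ reproduces the three claimed terms: $\tfrac{c_0}{\alpha_{\max}T} = O(\tfrac{\tau^2 L}{T})$, $(\tfrac{c_0 c_1}{T})^{1/2} = O((\tfrac{L\sigma^2}{nT})^{1/2})$, and $(\tfrac{c_0^2 c_2}{T^2})^{1/3} = O(\tau(\tfrac{L\sigma}{T})^{2/3})$. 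The contrast with Corollary~\ref{cor:rate} is entirely in $c_0$: without warm-up the retained initial-consensus-error term forces $c_0 = O(L^2)$, which is exactly what inflates the leading term to $\tfrac{\tau^2 L^3}{T}$ and the statistical term to $(\tfrac{L^3\sigma^2}{nT})^{1/2}$.

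The main obstacle is the bookkeeping of the first step rather than any deep new idea: I must reconstruct the $\alpha$-explicit form of Lemma~\ref{lem:consensus} (the stated version has $\alpha$ fixed at its maximal value, which is why its noise term reads $\tfrac{n}{6\tau L^2}\sigma^2$ rather than the $O(\alpha^2\tau^3 n\sigma^2)$ that tuning requires), and verify that the gradient coefficient stays below $1$ uniformly for $\alpha \le \alpha_{\max}$ so that the absorption into $\Psi(T)$ is legitimate. Once the master inequality is correct, the tuning step is routine and identical in form to the one behind Corollary~\ref{cor:rate}; the only change is tracking that the warm-up has removed every occurrence of the initial consensus error and hence of the extra $L^2$.
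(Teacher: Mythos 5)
Your proposal is correct and takes essentially the same route as the paper: the paper's own proof of Corollary~\ref{cor:warm-up} likewise bypasses the stated bound of Theorem~\ref{thm:GT-conv} and starts from the $\alpha$-explicit intermediate inequality~\eqref{eq:GT-conv-prof-new} (itself obtained by telescoping Lemma~\ref{lem:descent}, substituting the $\alpha$-explicit consensus bound~\eqref{eq:lem-consensus-xhat-6}, and absorbing the gradient term whose coefficient $48\alpha^4\tau^4L^4\le\tfrac12$ under the stepsize cap), then uses the warm-up hypothesis to delete the initial-consensus term so that $c_0=1$ rather than $L^2$. From there the paper runs the identical three-case tuning argument you describe, arriving at $\tfrac{\tau^2 L}{T}+\tau(\tfrac{L\sigma}{T})^{2/3}+(\tfrac{L\sigma^2}{nT})^{1/2}$.
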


\subsection{Discussion and comparison with other analyses for TV-GT} \label{sec:analysis-discussion}

The derived convergence rate in \cref{cor:warm-up} depends on the finite-time consensus parameter $\tau$, and, remarkably, is independent of the connectivity of any of the individual elements of the graph sequence. This contrasts, for example, the rate derived in \cite{song2023communicationefficient}, which depends on the smallest connectivity (in expectation) of the set of all time-varying topologies used in the algorithm. Thus, their analysis cannot handle finite-time consensus graphs because some elements in such a deterministic topology sequence can be disconnected. The underlying reason for the incapability is that their analysis examines the ``worst'' topology in the sequence but fails to consider the joint effect of the entire topology sequence.
\par
Moreover, the superiority of GT-FT can be demonstrated via comparison with TV-GT restricted to the static counterparts of the finite-time consensus graph sequences. Given a sequence of graphs $\{\cG^{(l)} \equiv (\cV, W^{(l)}, \cE^{(l)})\}_{l=0}^{\tau-1}$, its static counterpart $\cG^{\mathrm{(static)}} = (\cV, W^{\mathrm{(static)}}, \cE^{\mathrm{(static)}})$ is defined by $\cE^\mathrm{(static)} = \cE^{(0)} \cup \cE^{(1)} \cup \cdots \cup \cE^{(\tau-1)}$, and the weight matrix $W^\mathrm{(static)}$ is normalized to be doubly stochastic. (More discussion on the static variants of a sequence of (dynamic) graphs can be found in \cite{Bruijn1946ACP, NEURIPS2021_74e1ed8b, harary1988hypercube, Shi_2016}.) Without loss of generality, we assume the number of agents is $n = 2^\tau$ for some $\tau \in \natint_{\geq 1}$, and a number of $\tau$ one-peer exponential graphs (satisfying \cref{def:fin-time-cons}) are used in GT-FT. In this case, the best existing rate \cite{song2023communicationefficient} (to our knowledge) of TV-GT depends on the spectral gap $1-\rho = 2/(1+\tau)$ of the static exponential graph \cite{NEURIPS2021_74e1ed8b}, and reads as
\[
    O \left(\frac{\sigma^2}{n\epsilon^2} \right) + O \left(\frac{(1 + \tau)^{\frac{3}{2}} \sigma}{\epsilon^{\frac{3}{2}}} \right) + O \left(\frac{(1 + \tau)^{2}}{\epsilon}\right),
\]
where the Lipschitz constant $L$ is omitted in \cite{song2023communicationefficient}. In comparison, it follows from \cref{cor:warm-up} that the iteration complexity of GT-FT using a sequence of $\tau$ one-peer exponential graphs is given by
\[
    O \left(\frac{L\sigma^2}{n\epsilon^2}\right) + O\left(\frac{\tau^{\frac{3}{2}}L\sigma}{\epsilon^{\frac{3}{2}}} \right) + O \left(\frac{\tau^2 L}{\epsilon} \right).
\]
Ignoring the Lipschitz constants as in \cite{song2023communicationefficient}, we find that this implementation of GT-FT has a similar iteration complexity to TV-GT using a static exponential graph. Remarkably, this slight improvement in convergence rate comes with a significant decrease in communication cost: The maximum degree of a static exponential graph is $\Theta (\log_2 n)$ while that of a single one-peer exponential graph is $\Theta (1)$. Similar comparisons can also be performed for $p$-peer hyper-cuboids and static hyper-cuboids.

\section{Numerical experiments} \label{sec:experiments}

In this section, we present numerical results to verify our theoretical findings. The purpose of the numerical experiments is two-fold. First, numerical evidence is provided to verify that the graph sequences studied in \Cref{sec:fin-time-cons} satisfy the finite-time consensus property. Moreover, we conduct numerical experiments that incorporate the studied graph sequences into decentralized optimization algorithms. The numerical results demonstrate that GT-FT using graph sequences with finite-time consensus property converges at the same rate as TV-GT using the static counterparts.

\subsection{Finite-time consensus property} \label{sec:exp-consensus}

We verify in numerical experiments that the presented graph sequences satisfy the finite-time consensus property. To do so, we simulate an average consensus problem. Each agent is initialized with a random vector~$x_i^{(0)} \sim \mathcal N(0, \Sigma)$ drawn from a Gaussian distribution (with $\Sigma \in \symm^d_{++}$). The iterates $x_i^{(k)}$ evolve according to the recursion $x_i^{(k+1)} = W^{(k)} x_i^{(k)}$ for $i=1,\ldots,n$, and the consensus error at each iteration is defined as
\[
    \Xi^{(k)} \define \frac{1}{n} \sum_{i=1}^n \|x_i^{(k)} - \xbar^{(0)}\|^2.
\]
In the simulation, we compare the presented time-varying graphs with their corresponding static variant, which is defined in \Cref{sec:analysis-discussion}.

The simulation results are presented in \Cref{fig:fin-time-consensus}, of which the setting and style closely follow \cite[Figure~4]{NEURIPS2021_74e1ed8b}. In \cref{fig:fin-time-consensus}, the graph sequences satisfying \cref{def:fin-time-cons} have a steep drop in the consensus error (see dashed lines), indicating the vanishing of the consensus error. In comparison, the consensus error for other graphs decreases asymptotically (at an exponential rate).
\begin{figure}[t]
    \centering
    \includegraphics[width=0.98\linewidth]{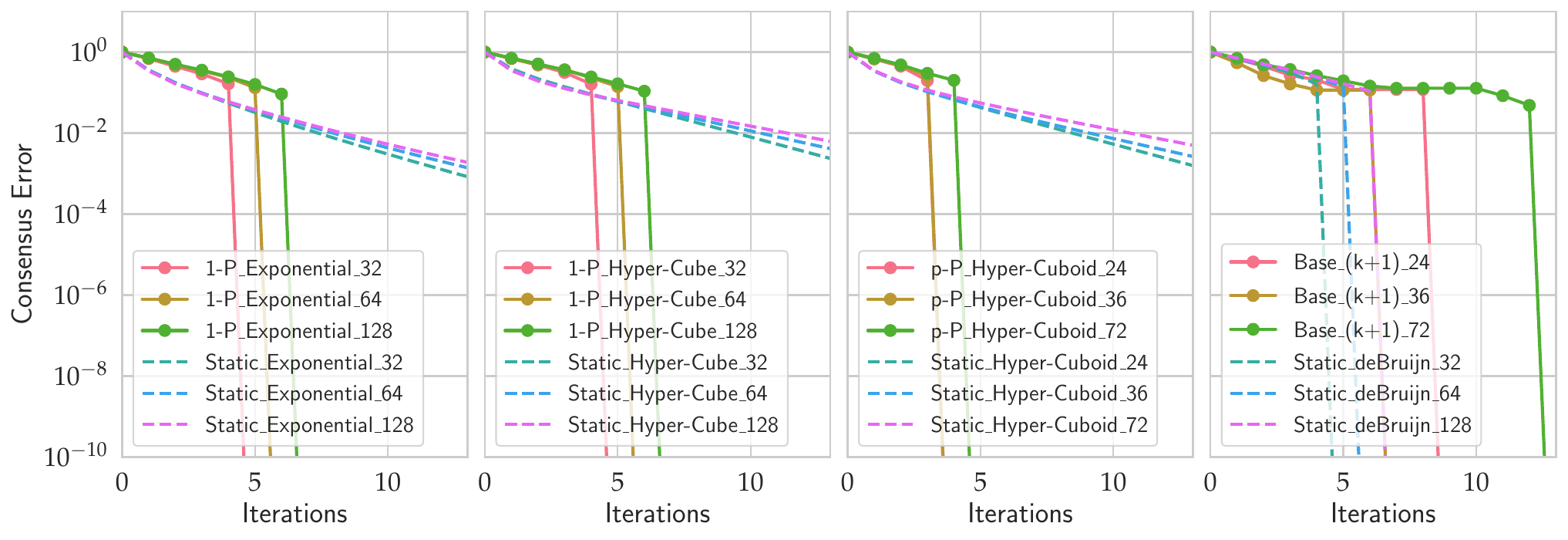}
    \caption{\small Consensus error versus the number of iterations. The legend is composed of three parts. The first part is either ``Static'', ``1-P'', or ``$p$-P'', standing for static graphs, one-peer time-varying graphs, and $p$-peer time-varying graphs, respectively. The second part of the legend describes the graph type: exponential, hyper-cube, de Bruijn, or hyper-cuboid. The third part is for the number of agents. All graphs satisfying Definition~\ref{def:fin-time-cons} are plotted with solid lines, while others are plotted with dashed lines. The static variants do not achieve finite-time consensus. The 1-P time-varying graphs only achieve finite-time consensus when the number of agents is a power of two. The $p$-P time-varying graphs always achieve finite-time consensus. The sizes of split network components used in Base-$(k+1)$ graphs \cite{takezawa2023exponential} are $(16, 8)$ for 24 nodes size, $(32, 4)$ for 36 nodes, and $(64, 8)$ for 72 nodes, respectively.}
    \label{fig:fin-time-consensus}
\end{figure}

\subsection{Gradient tracking with finite-time consensus topologies} \label{sec:exp-converge}

In this section, we provide numerical evidence to verify the theoretical guarantees established in \Cref{sec:analysis} and to demonstrate the potential benefits of the finite-time consensus property in decentralized optimization algorithms. More specifically, we apply GT-FT to solve the least squares problem with a nonconvex regularization term:
\[
    \mini \quad \frac{1}{n} \sum_{i=1}^n \lVert A_i x - b_i \rVert^2 + \mu \sum_{j=1}^{d} \frac{x[j]^2}{1+ x[j]^2},
\]
where the optimization variable is $x \in \reals^d$, $x[j]$ denotes the $j$th component of $x$, and the data $\{A_i, b_i\}$ is held exclusively by agent $i$. This problem instance has been used extensively in the literature (see, \eg, \cite{xin2021improved,alghunaim2021unified}), and we follow existing conventions to construct the problem data. The entries in each data matrix $A_i \in \reals^{m \times d}$ are drawn IID from the distribution $\cN(0,1)$, and so are the vectors $\{\tilde x_i\}_{i=1}^n \subset \reals^d$. The vector $b_i \in \reals^d$ is then computed by $b_i = A_i \tilde x_i + \delta z_i$, where $\delta \in \reals_{>0}$ is a prescribed constant and $z_i \in \reals^d$ is random noise with entries drawn IID from $\cN(0,1)$. In all the experiments, we set $m = 500$, $d=20$, and $\delta = 10$. The number of agents ($n$) might vary in the experiments and will be specified later.
\par
In addition, we consider the case where agents have access to the true gradients and the case where agents only have access to the stochastic gradient estimates. The stochastic gradient is formed by adding Gaussian noise to the true gradient, \ie, $\widehat{\nabla f}_i(x) = \nabla f_i(x) + s_i$ with $s_i \sim \cN (0, \sigma^2 I_d)$. The magnitude of the gradient noise can be controlled by the constant $\sigma^2$, and we set $\sigma^2 = 10^{-4}$. Regarding optimization algorithms, we consider GT-FT (\cref{alg:gt}) for graph sequences satisfying \cref{def:fin-time-cons}, TV-GT for the static counterparts, and DGD for both types of graphs. In all the experiments, the stepsize is set to $\alpha = 10^{-4}$.

\subsubsection{Numerical results}

\cref{fig:exp-exponential} presents the numerical results for one-peer exponential graphs and their static variant, and \cref{fig:exp-hc} presents those for $p$-peer hyper-cuboids and their static variant. Regardless of whether true gradients or their stochastic estimates are used, the convergence rate of decentralized algorithms using graph sequences with finite-time consensus is similar to that using their static counterparts. In view of this similar rate of convergence, using graph sequences satisfying \cref{def:fin-time-cons} would significantly reduce the communication costs compared with using the static counterpart. For example, the maximum degree of a static exponential graph is $\Theta (\log n)$ while that of a single one-peer exponential graph is $\Theta(1)$.

\begin{figure}
    \centering
    \begin{subfigure}[b]{0.45\textwidth}
    \centering
    \includegraphics[width=\textwidth]{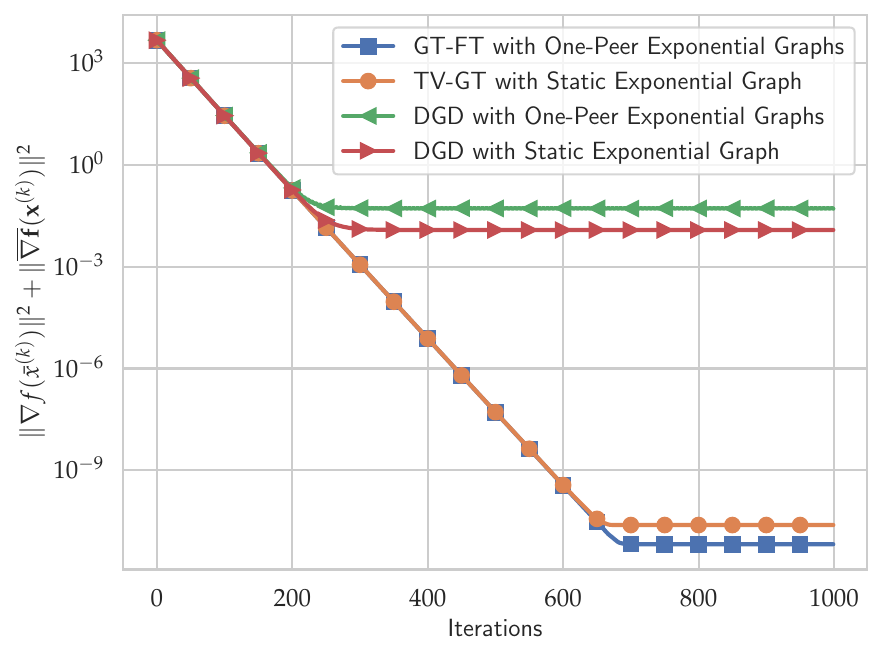}
    \caption{Agents use true gradients.}
    \label{fig:exp-exponential-determ}
    \end{subfigure}
    \begin{subfigure}[b]{0.45\textwidth}
    \centering
    \includegraphics[width=\textwidth]{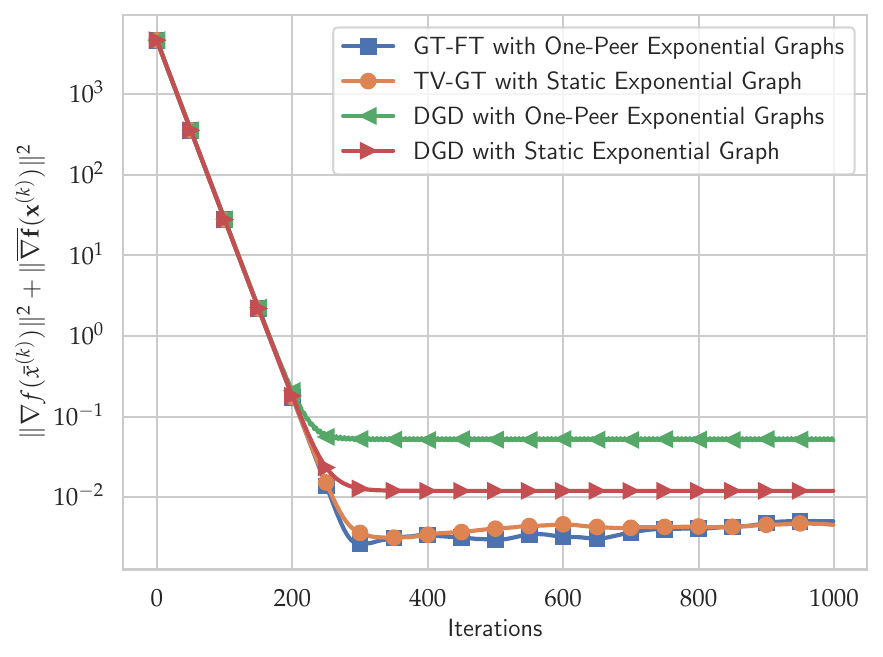}
    \caption{Agents use stochastic gradients.}
    \label{fig:exp-exponential-stoch}
    \end{subfigure}
    \caption{Comparison of the use of one-peer exponential graphs and static exponential graphs in decentralized algorithms. One-peer exponential graphs are used in GT-FT and DGD, and static exponential graphs are used in TV-GT and DGD.}
    \label{fig:exp-exponential}
\end{figure}

\begin{figure}
    \centering
    \begin{subfigure}[b]{0.45\textwidth}
    \centering
    \includegraphics[width=\textwidth]{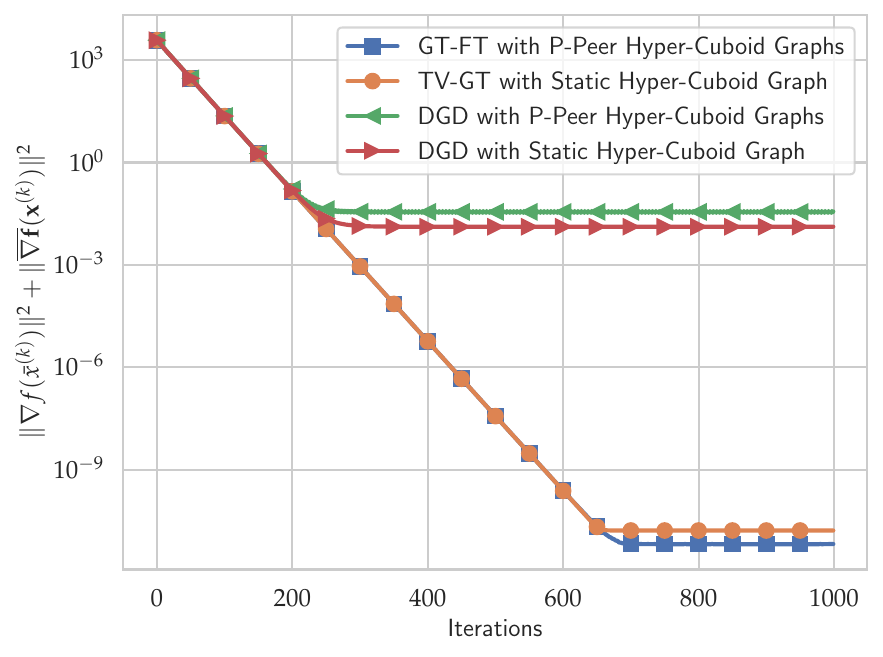}
    \caption{Agents use true gradients.}
    \end{subfigure}
    \begin{subfigure}[b]{0.45\textwidth}
    \centering
    \includegraphics[width=\textwidth]{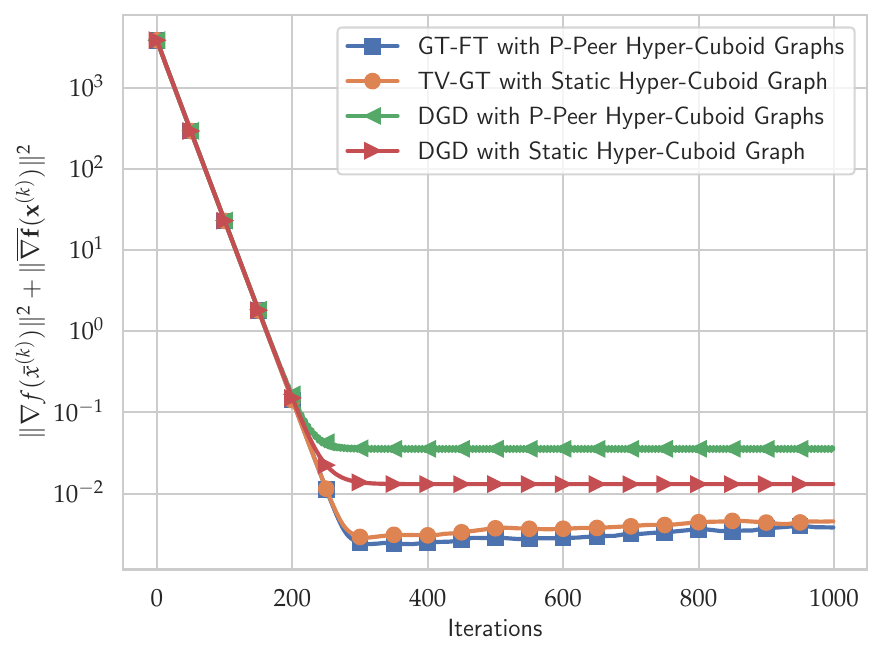}
    \caption{Agents use stochastic gradients.}
    \end{subfigure}
    \caption{Comparison of the use of $p$-peer hyper-cuboids and static hyper-cuboids in decentralized optimization algorithms. $p$-peer hyper-cuboids are used in GT-FT and DGD, and static hyper-cuboids are used in TV-GT and DGD.}
    \label{fig:exp-hc}
\end{figure}

\subsubsection{Discussion on GT-FT and DGD}

We conclude this section with a short discussion on the performance of GT-FT and DGD. When true gradients are used (and a constant stepsize is applied), GT-FT converges to a significantly better solution than DGD. The theoretical rationale for the better performance of GT methods has been thoroughly studied in the literature \cite{yuan2016convergence, alghunaim2019decentralized, koloskova2020unified, alghunaim2021unified}. With a constant stepsize, DGD converges to a sub-optimal solution biased proportionally to the magnitude of heterogeneity between agents~\cite{yuan2016convergence,koloskova2020unified}. In comparison, as a gradient tracking method, GT-FT is able to correct this bias caused by heterogeneity, and converges to a better solution \cite{alghunaim2019decentralized,alghunaim2021unified}. However, when agents can only use stochastic gradients, our experiments indicate that the difference in the quality of the solutions returned by GT-FT and DGD is much smaller. This is because both the gradient noise and the heterogeneity bias affect the computed solution returned by GT-FT and DGD. This phenomenon has already been observed for GT methods and DGD using a static topology~\cite{alghunaim2021unified}.

\section{Conclusions and future work}

We study several graph sequences satisfying the finite-time consensus property, including the one-peer exponential graphs, one-peer hyper-cubes, $p$-peer hyper-cuboids, and de Bruijn graphs. For each class of graphs, we present an explicit weight matrix representation and theoretically justify their finite-time consensus property. In particular, to the best of our knowledge, $p$-peer hyper-cuboids are one of the few classes of sparse graphs with arbitrary node sizes for which the finite-time consensus property is proven to hold. Moreover, we incorporate the studied topology sequences into the gradient tracking methods for decentralized optimization. Our analysis shows that the convergence rate of the proposed algorithmic scheme does not depend on the connectivity of any individual graph in the topology sequence, and the new scheme requires significantly lower communication costs compared with Gradient Tracking using the static counterpart of the topology sequence.
\par
Despite the success of incorporating graph sequences with finite-time consensus in decentralized optimization algorithms, several open questions remain. For example, the incorporation of the proposed graph sequences into other decentralized methods, such as EXTRA \cite{shi2015extra} and Exact Diffusion \cite{yuan2019exactdiffI}, is not straightforward. Furthermore, optimal communication costs and convergence rates for distributed algorithms with potentially disconnected time-varying graphs have yet to be established.


\appendix

\section{Supplementary materials for \texorpdfstring{\Cref{sec:fin-time-cons}}{Section~3}} \label{sec:app-de-bruijn}
We now present the proof of \cref{prop:de-bruijn-connection}, which establishes the connection between de Bruijn graphs and $p$-peer hyper-cuboids. The proof relies on the so-called \textit{perfect shuffle matrix} \cite[\S12.3]{horn13} and the Kronecker representation of de Bruijn graphs, which, to the best of our knowledge, has not been presented in the literature.
\begin{lemma}
    For $n = p^\tau$ with $(p, \tau) \in \natint_{\geq 2} \times \natint_{\geq 1}$ and for all vectors $\{a_l\}_{l=0}^{\tau-1}$, there exists a permutation matrix $P_{\mathrm s} \in \reals^{n \times n}$ such that
    \[
        P_{\mathrm s} (a_{\tau-1}\otimes a_{\tau-2} \otimes \cdots \otimes a_0) = a_{0}\otimes a_{\tau-1} \otimes \cdots \otimes a_{1}.
    \]
    Moreover, it holds that $P_{\mathrm s}^\tau = I_n$.
\end{lemma}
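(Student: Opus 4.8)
The plan is to construct $P_{\mathrm s}$ explicitly as the permutation matrix that cyclically rotates the tensor factors, and then to read off both assertions from its action on the standard basis. First I would index $\real^n$ by mixed-radix $p$-ary digits: for $i \in \{0,\ldots,n-1\}$ with $p$-ary representation $(i_{\tau-1}\cdots i_0)_p$, the standard basis vector factorizes as $e_i = \hat e_{i_{\tau-1}} \otimes \cdots \otimes \hat e_{i_0}$, where each $\hat e_{i_l} \in \real^p$ is a length-$p$ unit vector (exactly the convention used in~\eqref{eq:hypercuboid-prf-2}). I would then \emph{define} $P_{\mathrm s}$ through its action on this basis,
\[
    P_{\mathrm s}\big(\hat e_{i_{\tau-1}} \otimes \cdots \otimes \hat e_{i_1} \otimes \hat e_{i_0}\big) = \hat e_{i_0} \otimes \hat e_{i_{\tau-1}} \otimes \cdots \otimes \hat e_{i_1},
\]
that is, it carries the least significant factor into the most significant slot. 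Since the right-hand side is again an elementary tensor, hence some standard basis vector $e_{\phi(i)}$, this prescription sends basis vectors to basis vectors bijectively; therefore $P_{\mathrm s}$ is a well-defined permutation matrix, and crucially it does not depend on the vectors $\{a_l\}$.

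Next I would verify the stated action on a general Kronecker product by linearity. Writing each $a_l \in \real^p$ as $a_l = \sum_{d=0}^{p-1} a_l[d]\,\hat e_d$ and expanding multilinearly gives
\[
    a_{\tau-1}\otimes\cdots\otimes a_0 = \sum_{i_{\tau-1},\ldots,i_0} \Big(\textstyle\prod_{l} a_l[i_l]\Big)\, \hat e_{i_{\tau-1}}\otimes\cdots\otimes\hat e_{i_0}.
\]
Applying the linear map $P_{\mathrm s}$ term by term and regrouping the scalar $\prod_l a_l[i_l] = a_0[i_0]\,a_{\tau-1}[i_{\tau-1}]\cdots a_1[i_1]$ against the permuted elementary tensors reproduces precisely the multilinear expansion of $a_0 \otimes a_{\tau-1}\otimes\cdots\otimes a_1$, which is the claimed identity.

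Finally, for $P_{\mathrm s}^\tau = I_n$ I would note that the defining action is a cyclic rotation of the $\tau$ tensor slots: each application moves the contents of the last slot to the first and shifts the remaining factors back by one position. Iterating this $\tau$ times returns every factor to its original slot, so $P_{\mathrm s}^\tau$ fixes each elementary tensor $e_i$. Because the elementary tensors $\{e_i\}_{i=0}^{n-1}$ form a basis of $\real^n$, this forces $P_{\mathrm s}^\tau = I_n$.

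I expect the main obstacle to be purely notational: keeping the correspondence between factor order and digit position consistent, so that ``move the last factor to the front'' matches the specific target arrangement $a_0 \otimes a_{\tau-1}\otimes\cdots\otimes a_1$ and not its inverse rotation. Once the convention $e_i = \hat e_{i_{\tau-1}}\otimes\cdots\otimes\hat e_{i_0}$ is fixed, the index matching is forced, and the remaining verifications are routine bookkeeping.
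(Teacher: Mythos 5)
Your proposal is correct and follows essentially the same route as the paper: the paper invokes Horn and Johnson (Section~12.3) for the existence of the perfect shuffle matrix and then obtains $P_{\mathrm s}^\tau = I_n$ by applying the cyclic rotation $\tau$ times, exactly as you do. The only difference is that you make the cited existence result self-contained --- defining $P_{\mathrm s}$ by its action on the standard basis tensors and extending to general Kronecker products by multilinearity --- which is a complete and valid substitute for the citation.
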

\begin{proof}
    The first result follows directly from \cite[\S12.3]{horn13}. The second result follows by repeatedly applying the first one:
    \[
        P_{\mathrm s}^\tau (a_{\tau-1} \otimes \cdots \otimes a_0) = P_{\mathrm s}^{\tau-1} (a_{0} \otimes a_{\tau-1} \otimes \cdots \otimes a_{1}) = \cdots = a_{\tau-1} \otimes a_{\tau-2} \otimes \cdots \otimes a_0. \qedhere
    \]
\end{proof}
This permutation matrix $P_{\mathrm s}$ is called the \textit{perfect shuffle matrix} \cite[\S12.3]{horn13}, hence the subscript ``s'' in $P_{\mathrm s}$. Following the same steps as in \eqref{eq:hypercuboid-prf-2}--\eqref{eq:hypercuboid-prf-4}, the matrix form of de Bruijn graphs can be written concisely as
\begin{equation} \label{def:de-bruijn-mat-kron}
    W_\mathrm{db} = \big(J_p \otimes I \otimes \cdots \otimes I \otimes I \big) P_{\mathrm s}\tran, \quad \text{where} \ J_p \define \tfrac{1}{p} \ones_p \ones_p\tran.
\end{equation}
\par
We now present the proof of \cref{prop:de-bruijn-connection}, which states the permutation equivalence between de Bruijn graphs and $p$-peer hyper-cuboids. More specifically, with the perfect shuffle matrix $P_{\mathrm s}$, the connection~\eqref{eq:de-bruijn-connection} stated in \cref{prop:de-bruijn-connection} can be written more explicitly as
\[
    W_\mathrm{hc}^{(\tau-l)} = (P_{\mathrm s}\tran)^{l+1} W_\mathrm{db} P_{\mathrm s}^l, \quad \text{for all} \ l=0,1,\ldots,\tau-1,
\]
where $W_\mathrm{hc}$ is the weight matrix of the $p$-peer hyper-cuboid~\eqref{def:hypercuboid-mat}.
\begin{proof}[Proof of \cref{prop:de-bruijn-connection}]
    It follows from \eqref{def:hypercuboid-mat-kron} that
    \begin{subequations}
        \begin{align}
            W_\mathrm{hc}^{(\tau-l)} &= \underbrace{I \otimes \cdots \otimes I}_{l-1 \text{ times}} \otimes J_p \otimes I \cdots \otimes I \label{eq:hc-deBruijn-prf-1} \\
            &= (P_{\mathrm s}\tran)^l (J_p \otimes I \otimes \cdots \otimes I \otimes I) P_{\mathrm s}^l \label{eq:hc-deBruijn-prf-2} \\
            &= (P_{\mathrm s}\tran)^l P_{\mathrm s}\tran P_{\mathrm s} (J_p \otimes I \otimes \cdots \otimes I \otimes I) P_{\mathrm s}^l \label{eq:hc-deBruijn-prf-3} \\
            &= P_{\mathrm s}\tran (P_{\mathrm s}\tran)^l W_\mathrm{db} P_{\mathrm s}^l. \label{eq:hc-deBruijn-prf-4} 
       \end{align}
    \end{subequations}
    Step~\eqref{eq:hc-deBruijn-prf-1} uses the Kronecker representation~\eqref{eq:hypercuboid-prf-4}. Step~\eqref{eq:hc-deBruijn-prf-2} applies the following property of the perfect shuffle matrix $l$ times to shift $J_p$ to the beginning: 
    \[
        P_{\mathrm s} (A_{\tau-1} \otimes A_{\tau-2} \cdots \otimes A_1 \otimes A_0) P_{\mathrm s}\tran = A_0 \otimes A_{\tau-1} \otimes \cdots \otimes A_2 \otimes A_1,
    \]
    where $\{A_i\}_{i=0}^{\tau-1}$ are any arbitrary $p \times p$ matrices. Step~\eqref{eq:hc-deBruijn-prf-3} uses the fact that $P_{\mathrm s}\tran P_{\mathrm s} = I$. Step~\eqref{eq:hc-deBruijn-prf-4} uses the definition of de Bruijn graphs in~\eqref{def:de-bruijn-mat-kron}.
\end{proof}

\section{Supplementary materials for \texorpdfstring{\Cref{sec:analysis}}{Section~5}} \label{sec:app-analysis}
This section includes the missing proofs from \Cref{sec:analysis}. First, we introduce the abbreviations 
\[
    \overline{\nabla f}^{(k)} \define \overline{\nabla f} (\x^{(k)}), \qquad
    \nabla \f^{(k)} \define \nabla \f(\x^{(k)}), \qquad
    \nabla \F^{(k)} \define \nabla \F (\x^{(k)}, \boldsymbol\xi^{(k)})
\]
for all $k \in \natint$, as well as a list of notations
\begin{gather}
    \bar{g}^{(k)} \define \frac{1}{n} \sum\limits_{i=1}^n g^{(k)}_i, \qquad \sbar^{(k)} \define \frac{1}{n} \sum\limits_{i=1}^n \big(\nabla F_i(\x^{(k)}_i; \xi^{(k)}_i) - \nabla f_i (\x^{(k)}_i) \big), \nonumber \\
    \bar{\g}^{(k)} \define \ones_n \otimes \bar{g}^{(k)}, \qquad
    \overline{\nabla \f}^{(k)} \define \overline{\nabla f}^{(k)} \otimes I_d, \qquad
    \s^{(k)} \define \nabla \F^{(k)} - \nabla \f^{(k)}. \label{eq:grad-noise}
\end{gather}
We also introduce the notation 
\[
    \coprod\limits_{k=i}^{j} \W^{(k)} \define \W^{(i)} \W^{(i-1)} \cdots \W^{(j)}, \ \ \text{for all} \ (i,j) \in \natint \times \natint \ \text{with} \ i \geq j.
\]

\subsection{Proof of \texorpdfstring{\cref{lem:descent}}{Lemma}}

The proof of the descent inequality is standard in the literature on GT methods. In particular, it follows from \cite[Eq.\ (73)]{alghunaim2021unified} that
\[
    \Ex f(\bar{x}^{(k+1)}) \leq f(\xbar^{(k)}) - \tfrac{\alpha (1 - \alpha L)}{2} \Ex \lVert \overline{\nabla f}^{(k)} \rVert^2 - \tfrac{\alpha}{2} \Ex \lVert \nabla f(\xbar^{(k)}) \rVert^2 \leq + \tfrac{\alpha}{2} \lVert \overline{\nabla f}^{(k)} - \nabla f(\xbar^{(k)}) \rVert^2 + \tfrac{\alpha^2 L \sigma^2}{2n},
\]
and from \cite[Eq.\ (66)]{alghunaim2021unified} that $\lVert \overline{\nabla f}^{(k)} - \nabla f(\bar{x}^{(k)}) \rVert^2 \leq \tfrac{L^2}{n} \lVert \x^{(k)} - \bar{\x}^{(k)} \rVert^2 = \tfrac{L^2}{n} \lVert \hat{\x}^{(k)} \rVert^2$.
Combining the above two inequalities yields the desired result.

\subsection{Proof of \texorpdfstring{\cref{lem:consensus}}{Lemma}} \label{sec:app-consensus}

The proof of the consensus inequality leverages the new presentation of $\hat{\x}^{(k)}$~\eqref{eq:lem-consensus-xhat-1}. More specifically, splitting~\eqref{eq:lem-consensus-xhat-1} via the definition~\eqref{eq:grad-noise} gives
\begin{align}
    \hat{\x}^{(k)} &= - \alpha \sum\limits_{j = k - \tau}^{k-1} (k-j) \coprod\limits_{i=k-1}^{j} \widehat{\W}^{(i)}  \big(\nabla \f^{(j)} - \nabla \f^{(j-1)} \big) - \alpha \widehat{\W}^{(k-1)} \s^{(k-1)} \nonumber \\
    &\phantom{=} \ - \alpha \sum\limits_{j=k-\tau}^{k-2} \Big( (k-j) \coprod\limits_{i=k-1}^{j} \widehat{\W}^{(i)} - (k-j-1)  \coprod\limits_{i=k-1}^{j+1} \widehat{\W}^{(i)} \Big) \s^{(j)}. \label{eq:lem-consensus-xhat-2}
\end{align}
To provide an upper bound on $\Ex_k [\|\hat\x^{(k)}\|^2]$, we bound each of the three terms on the right-hand side of~\eqref{eq:lem-consensus-xhat-2} one by one. First, it holds that
\begin{subequations}
\begin{align}
    \MoveEqLeft[0.2] \Ex_k \Big[\Big\lVert \sum\limits_{j=k-\tau}^{k-1} (k-j) \Big(\coprod\limits_{i=k-1}^j \widehat \W^{(i)} \Big) \big(\nabla \f^{(j)} - \nabla \f^{(j-1)} \big) \Big\rVert^2 \Big] \nonumber \\
    &\leq \tau^3 \sum\limits_{j=k-\tau}^{k-1} \Ex_k \Big[\Big\lVert \Big(\coprod\limits_{i=k-1}^j \widehat \W^{(i)} \Big) \big(\nabla \f^{(j)} - \nabla \f^{(j-1)} \big) \Big\rVert^2 \Big] \label{eq:lem-consensus-prf-1a} \\
    &\leq \tau^3 \sum\limits_{j=k-\tau}^{k-1} \Ex_k [\lVert \nabla \f^{(j)} - \nabla \f^{(j-1)} \rVert^2] \label{eq:lem-consensus-prf-1b} \\
    &\leq \tau^3 L^2 \sum\limits_{j=k-\tau}^{k-1} \Big(2\alpha^2 n \Ex_k [\lVert\nabla f(\xbar^{(j-1)})\rVert^2] + 9 \Ex_k [\lVert \hat\x^{(j-1)} \rVert^2] + 3 \Ex_k [\lVert \hat\x^{(j)} \rVert^2] + 3\alpha^2 \sigma^2 \Big). \label{eq:lem-consensus-prf-1c}
\end{align}
\end{subequations}
Note that \eqref{eq:lem-consensus-prf-1a} uses Jensen's inequality, and \eqref{eq:lem-consensus-prf-1b} uses the submultiplicative property of matrix norms and the fact that $\lVert \widehat \W^{(j)}\rVert_2 \leq 1$. The last step is standard in the analysis of GT methods (see, \eg, \cite[Lemma~11]{song2023communicationefficient}), and its derivation is omitted due to space limit.
\par
Second, it follows from the fact that $\lVert \widehat \W^{(k)}\rVert_2 \leq 1$, the definition of $\s^{(k)}$ \eqref{eq:grad-noise} and \cref{ass:noise} that
\begin{equation} \label{eq:lem-consensus-prf-2}
    \Ex_k [\|\widehat\W^{(k-1)} \s^{(k-1)}\|^2] \leq n \sigma^2, \ \ \text{for all} \ k \in \natint.
\end{equation}
\par
Third, it holds that
\begin{subequations}
\begin{align}
    \MoveEqLeft[0.2] \Ex_k \Big[\Big\lVert \sum\limits_{j=k-\tau}^{k-2} \Big( (k-j) \coprod\limits_{i=k-1}^{j} \widehat{\W}^{(i)} - (k-j-1)  \coprod\limits_{i=k-1}^{j+1} \widehat{\W}^{(i)} \Big) \s^{(j)} \Big\rVert^2 \Big] \nonumber \\
    &= \sum\limits_{j=k-\tau}^{k-2} \Ex_k \Big[\Big\lVert \Big( (k-j) \coprod\limits_{i=k-1}^{j} \widehat{\W}^{(i)} - (k-j-1)  \coprod\limits_{i=k-1}^{j+1} \widehat{\W}^{(i)} \Big) \s^{(j)} \Big\rVert^2 \Big] \label{eq:lem-consensus-prf-3a} \\
    &\leq 2 \sum\limits_{j=k-\tau}^{k-2} \Big( (k-j)^2 \Big\lVert \coprod\limits_{i=k-1}^j \widehat \W^{(i)} \Big\rVert_2^2 + (k-j-1)^2 \Big\lVert \coprod\limits_{i=k-1}^{j+1} \widehat \W^{(i)} \Big\rVert_2^2 \Big) \Ex_k [\lVert \s^{(j)} \rVert^2] \label{eq:lem-consensus-prf-3b} \\
    &\leq 4\tau^2 \sum\limits_{j=k-\tau}^{k-2} \Ex_k [\lVert \s^{(j)} \rVert^2] \label{eq:lem-consensus-prf-3c} \\
    &\leq 4 \tau^2 (\tau-1) n \sigma^2. \label{eq:lem-consensus-prf-3d}
\end{align}
\end{subequations}
In \eqref{eq:lem-consensus-prf-3a}, we use the independence of the gradient noise, and \eqref{eq:lem-consensus-prf-3b} uses the properties of matrix norms. Then, in \eqref{eq:lem-consensus-prf-3c} we use the fact $k-j-1 < k-j \leq \tau$ and that $\lVert {\widehat{\W}}^{(i)} \rVert^2 \leq 1$, and \eqref{eq:lem-consensus-prf-3d} applies \cref{ass:noise}.
\par
Combining \eqref{eq:lem-consensus-xhat-2}, \eqref{eq:lem-consensus-prf-1c}, \eqref{eq:lem-consensus-prf-2}, and \eqref{eq:lem-consensus-prf-3d} yields
\begin{align}
    \Ex_k [\lVert \hat{\x}^{(k)} \rVert^2] &\leq 12\alpha^4 \tau^3 n L^2 \sum\limits_{j=k-\tau}^{k-1} \Ex_k [\lVert \nabla f(\bar{x}^{(j-1)}) \rVert^2 ] + (6 \alpha^4 \tau^4 L^2 + 8 \alpha^2 \tau^3 n ) \sigma^2 \nonumber \\
    &\phantom{=} \ + 18\alpha^2 \tau^3 L^2 \sum\limits_{j=k-\tau}^{k-1} \Ex_k [\lVert \hat{\x}^{(j-1)} \rVert^2 ] + 6\alpha^2 \tau^3 L^2 \sum\limits_{j=k-\tau}^{k-1} \Ex_k [\lVert \hat{\x}^{(j)} \rVert^2]. \label{eq:lem-consensus-xhat-3}
\end{align}
where we also use Jensen's inequality and the fact that $\tau \geq 1$.  
\par
Recall our assumption at the beginning of \Cref{sec:app-consensus} that $k-1 \geq \tau$. So we sum up~\eqref{eq:lem-consensus-xhat-3} over iteration $k$ from $\tau + 1$ to $T$ ($T \geq \tau + 1$), add $\sum_{k=0}^{\tau} \Ex_k [\lVert \hat{\x}^{(k)} \rVert^2]$ and divide $(T+1)$ on both sides:
\begin{align}
    \MoveEqLeft[0.2] \frac{1}{T+1} \sum\limits_{k=0}^{T} \Ex_j [\lVert \hat{\x}^{(k)} \rVert^2] \nonumber \\ 
    &\leq \frac{12\alpha^4 \tau^3 n L^2}{T+1} \sum\limits_{k=\tau+1}^T \sum\limits_{j=k-\tau-1}^{k-2} \Ex_j [\lVert \nabla f(\bar{x}^{(j)}) \rVert^2 ] + \frac{1}{T+1} \sum\limits_{k=0}^{\tau} \Ex_k [\lVert \hat{\x}^{(k)} \rVert^2] \nonumber \\ 
    &\phantom{=} \ + \frac{18 \alpha^2 \tau^3 L^2}{T+1} \sum\limits_{k=\tau+1}^{T} \sum\limits_{j=k-\tau-1}^{k-2} \Ex_j [ \lVert \hat{\x}^{(j)} \rVert^2 ] + \frac{6\alpha^2 \tau^3 L^2}{T+1} \sum\limits_{k=\tau+1}^{T} \sum\limits_{j=k-\tau}^{k-1} \Ex_j [ \lVert \hat{\x}^{(j)} \rVert^2] \nonumber \\
    &\phantom{\leq} \ + \frac{(T - \tau + 1)}{T+1} (6 \alpha^4 \tau^4 L^2 + 8 \alpha^2 \tau^3 n)\sigma^2. \label{eq:lem-consensus-xhat-4}
\end{align}
Observe that for any constant $T \in \natint$ and for any sequence $\{\psi_j\} \subset \reals$, there exists a nonnegative sequence $\{\beta_j\} \subset \reals_{\geq 0}$ such that $\beta_j \leq 2\tau$ for $j=0,1,\ldots,T$ and
\begin{equation} \label{eq:lem-consensus-prf-aux} \textstyle
    \resizebox{0.86\hsize}{!}{$\displaystyle \sum\limits_{k=\tau+1}^{T} \sum\limits_{j = k - \tau -1}^{k-2} \psi_j = \sum\limits_{k=0}^{T-1} \beta_k \psi_k \leq 2\tau \sum\limits_{k=0}^{T} \psi_k, \;
    \sum\limits_{k=\tau+1}^{T} \sum\limits_{j = k - \tau}^{k-1} \psi_j = \sum\limits_{k=1}^{T} \beta_k \psi_k \leq 2\tau \sum\limits_{k=0}^T \psi_k.$}
\end{equation}
Thus, \eqref{eq:lem-consensus-xhat-4} can be further bounded as
\begin{align}
    \MoveEqLeft[0.2] \frac{1}{T+1} \sum\limits_{k=0}^{T} \Ex_k [\lVert \hat{\x}^{(k)} \rVert^2] \nonumber \\
    &\leq \frac{24\alpha^4 \tau^4 n L^2}{T+1} \sum\limits_{k=0}^T \Ex_k [\lVert \nabla f(\bar{x}^{(k)}) \rVert^2 ] + \frac{1}{T+1} \sum\limits_{k=0}^{\tau} \Ex_k [\lVert \hat{\x}^{(k)} \rVert^2] \nonumber \\ 
    &\phantom{=} \ \resizebox{0.74\hsize}{!}{$\displaystyle + \frac{48 \alpha^2 \tau^4 L^2}{T+1} \sum\limits_{k=0}^{T} \Ex_k [ \lVert \hat{\x}^{(k)} \rVert^2] + \frac{(T - \tau + 1)}{T+1} (6 \alpha^4 \tau^4 L^2 + 8 \alpha^2 \tau^3 n)\sigma^2$}, \label{eq:lem-consensus-xhat-5}
\end{align}
where we apply the first inequality in \eqref{eq:lem-consensus-prf-aux} twice (with $\psi_j \leftarrow \Ex_j [\lVert \nabla f(\bar{x}^{(j)}) \rVert^2]$ and with $\psi_j \leftarrow \Ex_j [\lVert \hat{x}^{(j)} \rVert^2]$), and the second inequality in \eqref{eq:lem-consensus-prf-aux} with $\psi_j \leftarrow \Ex_j [\lVert \hat{x}^{(j)} \rVert^2]$.  Then, the inequality \eqref{eq:lem-consensus-xhat-5} can be written equivalently as
\begin{align}
    \frac{1- 48 \alpha^2 \tau^4 L^2}{T+1} \sum\limits_{k=0}^{T} \Ex_k [\lVert \hat{\x}^{(k)} \rVert^2] &\leq \frac{24 \alpha^4 \tau^4 n L^2}{T+1} \sum\limits_{k=0}^T \Ex_k [\lVert \nabla f(\bar{x}^{(k)}) \rVert^2] + \frac{1}{T+1} \sum\limits_{k=0}^{\tau} \Ex_k [\lVert \hat{\x}^{(k)} \rVert^2] \nonumber \\
    &\phantom{\leq} \ + \frac{T - \tau + 1}{T+1} (6 \alpha^4 \tau^4 L^2 + 8 \alpha^2 \tau^3 n) \sigma^2. \label{eq:lem-consensus-xhat-6}
\end{align}
The stepsize condition $\alpha \leq \frac{1}{4 \sqrt{6} \tau^2 L}$ implies that $1 - 48 \alpha^2 \tau^4 L^2 \geq \tfrac{1}{2}$. Substituting this into~\eqref{eq:lem-consensus-xhat-6} gives
\begin{align*}
    \frac{1}{T+1} \sum\limits_{k=0}^{T} \Ex_k [\lVert \hat{\x}^{(k)} \rVert^2] &\leq \frac{48 \alpha^4 \tau^4 n L^2}{T+1} \sum\limits_{k=0}^T \Ex_k [\lVert \nabla f(\bar{x}^{(k)}) \rVert^2] + \frac{2}{T+1} \sum\limits_{k=0}^{\tau} \Ex_k [\lVert \hat{\x}^{(k)} \rVert^2] \nonumber \\
    &\textstyle \phantom{\leq} \ + \big(1 - \frac{\tau}{T+1} \big) (12 \alpha^4 \tau^4 L^2 + 16 \alpha^2 \tau^3 n) \sigma^2.
\end{align*}
Finally, the desired result \eqref{eq:conv-consensus} follows by taking the total expectation and relaxing $(1 - \tfrac{\tau}{T+1})$ to~$1$.

\subsection{Proof of \texorpdfstring{\cref{thm:GT-conv}}{Theorem}}

\cref{lem:descent} implies that for all $k \in \natint$,
\[ \textstyle
    \Ex \lVert \overline{\nabla f}^{(k)} \rVert^2 + \Ex \lVert \nabla f(\xbar^{(k)}) \rVert^2 \leq \frac{4}{\alpha} \big(\Ex \ftilde (\bar{x}^{(k)}) - \Ex \ftilde (\xbar^{(k+1)}) \big) + \frac{2 L^2}{n} \Ex \lVert \hat{\x}^{(k)} \rVert^2 + \frac{2\alpha L \sigma^2}{n},
\]
where $\ftilde (x) \define f - f^\star$. Taking the average over $k=0,\ldots,T$ gives
\[ 
    \frac{1}{T+1} \sum\limits_{k=0}^{T} \big(\Ex \lVert \overline{\nabla f}^{(k)} \rVert^2 {+} \Ex \lVert \nabla f(\xbar^{(k)}) \rVert^2 \big) \leq \frac{4\Ex \ftilde (\bar{x}^{(0)})}{\alpha (T+1)} + \frac{2 L^2}{n(T+1)} \sum\limits_{k=0}^T \Ex \lVert \hat{\x}^{(k)} \rVert^2 + \frac{2\alpha L \sigma^2}{n},
\]
where we use the fact $\tilde f(x) \geq 0$ for any $x \in \dom f$. Substituting~\eqref{eq:conv-consensus} yields
\begin{align*}
    \MoveEqLeft[0.2] \frac{1 - 96\alpha^4 \tau^4 L^4}{T+1} \sum\limits_{k=0}^{T} \big(\Ex \lVert \overline{\nabla f}^{(k)} \rVert^2+ \Ex \lVert \nabla f(\bar{x}^{(k)}) \rVert^2 \big) \\
    &\leq \frac{4}{\alpha (T+1)} \Ex \ftilde (\bar{x}^{(0)}) + \frac{4L^2}{n(T+1)} \sum\limits_{k=0}^\tau \Ex \lVert \hat{\x}^{(k)} \rVert^2 + \Big(\frac{24\alpha^4 \tau^4 L^4}{n} + 32 \alpha^2 \tau^3 L^2 + \frac{2\alpha L}{n} \Big) \sigma^2.
\end{align*}
The stepsize condition $\alpha \leq \frac{1}{4 \sqrt{6} \tau^2 L}$ implies $96\alpha^4 L^4 \tau^4 \leq \frac{1}{2}$, and thus we have
\begin{align}
    \MoveEqLeft[0.2] \frac{1}{T+1} \sum\limits_{k=0}^{T} \big(\Ex \lVert \overline{\nabla f}^{(k)} \rVert^2 + \Ex \lVert \nabla f(\bar{x}^{(k)}) \rVert^2 \big) \nonumber \\
    &\leq \resizebox{0.86\hsize}{!}{$\displaystyle \frac{8}{\alpha (T+1)} \Ex \ftilde (\bar{x}^{(0)}) + \frac{8L^2}{n(T+1)} \sum\limits_{k=0}^\tau \Ex \lVert \hat{\x}^{(k)} \rVert^2 + \Big(\frac{48\alpha^4 \tau^4 L^4}{n} + 64 \alpha^2 \tau^3 L^2 + \frac{4\alpha L}{n} \Big) \sigma^2.$} \label{eq:thm-prf} 
\end{align}
The desired result follows after further relaxing $\tfrac{1}{T+1}$ to $\tfrac{1}{T}$ and defining the constants $\gamma_1 = 8, \gamma_2 = 8 \sum_{k=0}^\tau \Ex \lVert \hat{\x}^{(k)} \rVert^2, \gamma_3 = 48, \gamma_4 = 64, \gamma_5 = 4$. (Note that $\gamma_2$ consists of only the first $\tau$ iterates, which can be safely viewed as a constant in an asymptotic analysis.)
    
\subsection{Proof of \texorpdfstring{\cref{cor:warm-up}}{Corollary}}

To simplify the notation, from now on, we use the notation $\lesssim$ to hide irrelevant constants. The notation $a \lesssim b$ means that there exists a positive constant $\gamma \in \reals_{>0}$ such that $a \leq \gamma b$. In our case, the important quantities that we keep are $\alpha$, $n$, $L$, and $\sigma$. For the statement of the corollary, we define the constants $\gamma_6=8, \gamma_7=64, \gamma_8 = 4$. Using this notation and the AllReduce warm-up strategy that guarantees $\sum_{k=0}^{\tau} ( \Ex \lVert \hat{\x}^{(k)} \rVert^2) = 0$, we obtain from~\eqref{eq:thm-prf} and the stepsize condition $\alpha \lesssim \tfrac{1}{\tau^2 L}$ that
\begin{align*}
    \MoveEqLeft[0.2] \frac{1}{T+1} \sum\limits_{k=0}^{T} \big(\Ex \lVert \overline{\nabla f}^{(k)} \rVert^2 + \Ex \lVert \nabla f(\bar{x}^{(k)}) \rVert^2 \big) \\
    &\textstyle \lesssim \frac{1}{\alpha T}  + \frac{\alpha^4 \tau^4 L^4 \sigma^2}{n} + \alpha^2 \tau^3 L^2 \sigma^2 + \frac{\alpha L \sigma^2}{n} \\
    &\textstyle \lesssim \frac{1}{\alpha T} + \frac{\alpha^2 L^2 \sigma^2}{n} + \alpha^2 \tau^3 L^2 \sigma^2 + \frac{\alpha L \sigma^2}{n} \\
    &\textstyle \lesssim \frac{1}{\alpha T} +  \alpha^2 \tau^3 L^2 \sigma^2 + \frac{\alpha L \sigma^2}{n} \\
    &\textstyle \lesssim \frac{1}{\alpha T} +  c_1 \alpha  + c_2 \alpha^2,
\end{align*}
where $c_1 \define \frac{L \sigma^2}{n}$ and $c_2 \define \tau^3 L^2 \sigma^2$. Now we set the stepsize $\alpha$ as
\[
    \alpha = \min \left\{ \left( \frac{c_0}{c_1 T} \right)^{\frac{1}{2}}, \, \left(\frac{c_0}{c_2 T} \right)^{\frac{1}{3}}, \, \frac{1}{4 \sqrt{6} \tau^2 L} \right\}.
\]
By definition, this choice of $\alpha$ satisfies the stepsize condition in \cref{thm:GT-conv}. We then discuss the following three cases.
\begin{enumerate}[label=(\alph*)]
    \item If $\alpha = \tfrac{1}{4\sqrt{6} \tau^2 L} \leq \min \{\big( \tfrac{1}{c_1 T} \big)^{\frac{1}{2}}, \big( \tfrac{1}{c_2 T} \big)^{\frac{1}{3}} \}$, then
    $\frac{1}{\alpha T}  + \alpha c_1  + c_2 \alpha^2  \lesssim \frac{1}{\alpha T} + \left(\frac{c_1}{T}\right)^{\frac{1}{2}} + \frac{c_2^{\frac{1}{3}}}{T^{\frac{2}{3}}}$.

    \item If $\alpha = \big( \tfrac{1}{c_1 T} \big)^{\frac{1}{2}} \leq \big(\tfrac{1}{c_2 T} \big)^{\frac{1}{3}}$, then
    $\frac{1}{\alpha T} + c_1 \alpha +c_2 \alpha^2 \lesssim \frac{ c_1^{\frac{1}{2}}}{T^\frac{1}{2}} + \frac{c_2^{\frac{1}{3}}}{T^{\frac{2}{3}}}$.

    \item If $\alpha = \big( \tfrac{1}{c_2 T} \big)^{\frac{1}{3}} \leq \big( \tfrac{1}{c_1 T} \big)^{\frac{1}{2}}$, then
    $\frac{1}{\alpha T} + c_1 \alpha + c_2 \alpha^2 \lesssim \frac{ c_2^{\frac{1}{3}}}{T^{\frac{2}{3}}} +  \left( \frac{c_1}{T} \right)^{\frac{1}{2}}$.
\end{enumerate}
Combining all three cases yields
\begin{align*}
    \textstyle \frac{1}{\alpha T} + c_1 \alpha + c_2 \alpha^2 &\textstyle \lesssim \frac{1}{\alpha T} + \left( \frac{c_2}{T^2} \right)^{\frac{1}{3}} + \left( \frac{c_1}{T} \right)^{\frac{1}{2}} \nonumber \\
    &\textstyle \lesssim \frac{\tau^2 L}{T} + \left( \frac{c_2}{T^2} \right)^{\frac{1}{3}} + \left( \frac{c_1}{T} \right)^{\frac{1}{2}} \\
    &\textstyle \lesssim \frac{\tau^2 L}{T} + \tau \left( \frac{L\sigma}{T}\right)^{\frac{2}{3}} + \left(\frac{L \sigma^2}{n T}\right)^{\frac{1}{2}}. 
\end{align*}

\end{document}